\theoremstyle{plain}
\newtheorem{theorem}{Theorem}[section]	% remove [section] if you do not want two level numbering for theorems, lemmas,...
\newtheorem{lemma}[theorem]{Lemma}
\newtheorem{corollary}[theorem]{Corollary}
\theoremstyle{definition}
\newtheorem{definition}[theorem]{Definition}
\newtheorem{example}[theorem]{Example}
\theoremstyle{remark}
\newtheorem{remark}[theorem]{Remark}
\numberwithin{equation}{section}	% remove if you do not want two level numbering for equations
\def\C{\mathbb{C}}
\def\R{\mathbb{R}}    
\def\N{\mathbb{N}}  
\def\Rd{\mathbb{R}^{d}}
\def\Cr{\mathbb{C}^{r}}
\def\Mat#1{{{\rm M}_{#1}(\C)}}
\let\mib=\boldsymbol
\def\mnu{{\mib \nu}}
\def\mx{\mathbf{x}}
\def\vf{\mathsf{f}}
\def\vu{\mathsf{u}}
\def\vv{\mathsf{v}}
\def\vnu{\mathsf{\nu}}
\def\mA{\mathbf{A}}
\def\mP{\mathbf{P}}
\def\mB{\mathbf{B}}
\def\mI{\mathbf{I}}
\def\mR{\mathbf{R}}
\def\lD{\mathcal{D}}
\def\lH{\mathcal{H}}
\def\lV{\mathcal{V}}
\def\lW{\mathcal{W}}
\def\lX{\mathcal{X}}
\def\phi{\mathsf{\varphi}}
\def\eps{\varepsilon}
\def\dom{\operatorname{dom}}
\def\dim{\operatorname{dim}}
\def\ker{\operatorname{ker}}
\def\ran{\operatorname{ran}}
\def\cl{\operatorname{cl}}
\def\rank{\operatorname{rank}}
\def\dup#1#2#3#4{{}_{#1\!}\langle\, #2 , #3 \,\rangle_{#4}} % dual product
\def\scp#1#2{\langle\, #1 \mid #2 \,\rangle}  % scalar product
\def\iscp#1#2{[\, #1 \mid #2 \,]}  % indefinite scalar product
\begin{document}
%\today % It is good to have the date for easier later reference. We remove this in the final form.

\title[Friedrichs systems on an interval]{Friedrichs systems on an interval}

\author{M.~Erceg}\address{Marko Erceg,
	Department of Mathematics, Faculty of Science, University of Zagreb, Bijeni\v{c}ka cesta 30,
	10000 Zagreb, Croatia}\email{maerceg@math.hr}

\author{S.~K.~Soni}\address{Sandeep Kumar Soni,
	Department of Mathematics, Faculty of Science, University of Zagreb, Bijeni\v{c}ka cesta 30,
	10000 Zagreb, Croatia}\email{sandeep@math.hr}

\subjclass{34B05, 35F45, 46C05, 47B28}

%%%   15A18   Eigenvalues, singular values, and eigenvectors
%   34B05   Linear boundary value problems for ordinary differential equations
%%%   34G10  	Linear differential equations in abstract spaces
%   35F45   Boundary value problems for systems of linear first-order PDEs
%   46C05   Hilbert and pre-Hilbert spaces: geometry and topology (including spaces with semidefinite inner product)
%%%   46C20   Spaces with indefinite inner product (Kreĭn spaces, Pontryagin spaces, etc.)
%%%   47A05   General (adjoints, conjugates, products, inverses, domains, ranges, etc.)
%   47B28  	Nonselfadjoint operators
%%%   47B38   Linear operators on function spaces (general)

\keywords{
symmetric positive first-order system of partial differential equations,
dual pairs, 
indefinite inner product space, 
perturbation of matrices}

\begin{abstract}
There has been significant developments in the classification of boundary conditions of positive symmetric systems, also known as Friedrichs systems, after the introduction of operator theoretic framework. We take a step forward towards applying the abstract theory to the classical framework by studying Friedrichs systems on an interval. Dealing with some difficulties related to the smoothness of eigenvectors, here we present an explicit expression for the dimensions of the kernels of Friedrichs operators only in terms of the values of the coefficients at the end-points of the interval. In particular, this allows for a characterisation of all admissible boundary conditions, i.e.~those leading to bijective realisations.
\end{abstract}

\maketitle

%\tableofcontents	%table of content only for long papers

\section{Overview}\label{sec:Overview}
 
Continuing his research on \emph{symmetric hyperbolic systems} \cite{KOFh}, Friedrichs introduced the concept of \emph{positive symmetric systems} of first order linear partial differential equations \cite{KOF}, which are better known as \emph{Friedrichs systems}. More precisely,
\begin{definition}\label{def:CFO}
For a given open and bounded set $\Omega \subseteq \Rd$ with Lipschitz boundary $\Gamma$, let the matrix functions 
$\mA_k \in W^{1,\infty}(\Omega;\Mat{r})$, $k=1,2,\dots,d$,
and $\mB \in L^\infty(\Omega; \Mat{r})$ satisfy 
\begin{equation}
\mA_k= \mA_k^\ast \qquad \hbox{on} \,\, \Omega
\tag{F1}
\end{equation}
and
\begin{equation}
(\exists{\mu_0>0}) \quad \mB + \mB^\ast+\sum_{k=1}^d 
    \partial_k \mA_k\geq 2\mu_0\mI
    \qquad {\hbox{a.e.~on}\,\, \Omega}\,.
\tag{F2}
\end{equation}
Then the first-order differential operator $L : L^2(\Omega)^r\longrightarrow \mathcal{D}'(\Omega)^r$
defined by
\begin{equation}
L \vu \;:=\; \sum_{k=1}^d \partial_k(\mA_k \vu) + \mB \vu
\tag{CFO}
\end{equation}
(here derivatives are taken in the distributional sense) 
is called \emph{the (classical) Friedrichs operator} or \emph{the symmetric positive operator}, while 
(for given $\vf \in L^2(\Omega)^r$) the first-order system of partial differential
equations $L \vu = \vf$ is called \emph{the (classical) Friedrichs system} or 
\emph{the symmetric positive system}.
\end{definition}

Although the main motivation for introducing this concept was to study differential equations which change their type, like the \emph{Tricomi equation}, which appears in the transonic flow, this setting proved to be successful in studying various differential equations and related problems. Further development of the theory can be traced in the following references \cite{FL, PS}. 

In the classical theory, Friedrichs proposed a clever way of representing different boundary (and/or initial) conditions via a matrix field on the boundary, which we refer to as (FM)-boundary conditions. Two more but equivalent ways of representing (initial-)boundary conditions are studied in \cite{FL} and \cite{PS}, which we refer to as (FX) and (FV)-boundary conditions, respectively. These boundary conditions were well studied in some differential equations, however the non-uniqueness of matrix field on the boundary posed some challenges. Moreover, only the existence of weak solutions and uniqueness of the strong ones were achieved, leaving the general well-posedness problem open. 

One advantage of this theory is in the numerical studies of differential equations.
Indeed, this framework provides a unified theory for all types of (semi)linear partial differential equations, while the structure of first-order equations is beneficial for developing numerical schemes (see e.g.~\cite{EGbook, HMSW, MJensen}).
This interest derived from numerical analysis motivated the introduction of abstract theory, which was made in \cite{EGC}, for real Hilbert spaces, while the results in the complex setting are rigorously written in \cite{ABCE}.
More precisely,
\begin{definition}\label{def:abstractFO}
A (densely defined) linear operator $T$ on a complex Hilbert space $\lH$ 
is called an \emph{abstract Friedrichs operator} if it admits another
(densely defined) linear operator $\widetilde{T}$ on $\lH$ with the following properties:
\begin{itemize}
 \item[(T1)] $T$ and $\widetilde{T}$ have a common domain $\lD$, 
 which is dense in $\lH$, satisfying
 \[
 \scp{T\phi}\psi \;=\; \scp\phi{\widetilde T\psi} \;, \qquad \phi,\psi\in\mathcal{D} \,;
 \]
 \item[(T2)] there is a constant $c>0$ for which
 \[
 \|(T+\widetilde{T})\phi\| \;\leqslant\; c\|\phi\| \;, \qquad \phi\in\mathcal{D} \,;
 \]
 \item[(T3)] there exists a constant $\mu_0>0$ such that
 \[
 \scp{(T+\widetilde{T})\phi}\phi \;\geqslant\; 2\mu_0 \|\phi\|^2 \;, \qquad \phi\in\mathcal{D} \,.
 \]
\end{itemize}
The pair $(T,\widetilde{T})$ is referred to as a \emph{joint pair of abstract Friedrichs operators}
(the definition is indeed symmetric in $T$ and $\widetilde{T}$).
\end{definition}

The abstract theory encompasses the classical theory (i.~e.~a certain realisation of the operator $L$ from Definition \ref{def:CFO} can be understood as an abstract Friedrichs operator; see Example \ref{ex:cfo} below) and provides a well-posedness result. Here we break down the development of the abstract approach so far.
\begin{itemize}
    \item[(i)] The authors in \cite{EGC} proved the well-posedness result under the assumption of a set of (V)-boundary conditions, calling the \emph{cone formalism}, which is analogous to (FV)-boundary condition in the classical setting. Abstract formulation of (FM) 
    and (FX) boundary conditions is studied in the same reference, as (M) and (X) boundary conditions. However, the equivalence among these boundary conditions was not achieved in full generality.
    \item[(ii)] Connection of the abstract theory with the \emph{Kre\u\i n space} theory is studied in \cite{ABcpde}. The equivalence among the three types of boundary conditions in the abstract setting is proved in full generality.
    \item[(iii)] The non-stationary theory (in terms of the semigroup theory) suitable for evolution equations is developed in \cite{BEmjom}.
    \item[(iv)] In \cite{AEM-2017}, the authors formulated the abstract theory using the operator theoretic approach. Moreover, they proved the existence of a set of (V)-boundary conditions using the \emph{Kre\u\i n space} theory, for any pair of abstract Friedrichs operators. 
    \item[(v)] A simple and equivalent characterisation of the abstract Friedrichs operators and the developement of the extension theory in the spirit of the \emph{von Neumann theory} can be found in \cite{ES23} (see also \cite{ES22}).
    \item[(vi)] The approach of studying classical theory using abstract Friedrichs operators initiated a number of new investigations in various directions. For example, studies of different representations of boundary conditions and the relation with the classical theory \cite{ABjde, ABVisrn, AEM-2017, BH21, BPhd, ES22}, applications to diverse (initial-)boun\-dary value problems of elliptic, hyperbolic, and parabolic type \cite{ABVjmaa, BEmjom, BEW23, BVcpaa, EM19, EGsemel, MDS}, and the development of different numerical schemes \cite{BDG, BEF, CM21, CHWY23, EGbis, EGter}.
\end{itemize}

This operator theoretic framework provides us a convenient way to study the classical theory using the inner-product space theory (Hilbert and Kre\u\i n spaces). As we continue to develop the theory in this direction, in parallel, we seek to apply it on classical Friedrichs operators and continue to explore the realm of the rich framework. 

We take one more step towards this connection by studying the classical Friedrichs systems on intervals. More precisely, we are interested in one dimensional $(d=1)$ vectorial case. The first step was done in \cite{ES22}, by providing a complete classification of boundary conditions in the one dimensional $(d=1)$ scalar $(r=1)$ case. The problem becomes challenging due to non-smoothness of eigenvectors of the coefficient matrices appearing in the definition of (CFO). We overcome this difficulty by making use of some result related to eigenprojections. 

The paper is organised as follows. In Section \ref{sec:abstractFO}, we recall the theory of abstract Friedrichs operators in the direction of results proved in this paper. In Section \ref{sec:total projections}, we obtain a smoothness result on total projections and define the boundary operator and minimal domain explicitly. In Section \ref{sec:1-d-vec} we prove our main result (Theorem \ref{thm:codim}) which is a direct relation on the dimension of kernels and the rank of coefficient matrix appearing in (CFO) at end-points of the interval. In Section \ref{sec:examples}, we illustrate our results on second order linear ordinary differential equations and on a system of first order. In Appendix (Section \ref{sec:kato}) the necessary ingredients related to the smoothness of eigenvalues and total projections along with a codimension lemma have been presented.

\noindent\textbf{Notation.} Most of our notations are standard, 
let us only emphasise the following. 
For the sake of generality, in the paper we work on 
complex vector spaces. Thus, by $\lH$ we denote 
a complex Hilbert space with scalar product 
$\scp\cdot\cdot$, which we take to be 
linear in the first and anti-linear in the second entry.
The corresponding norm is given by
$\|\cdot\|:=\sqrt{\scp\cdot\cdot}$.
For $\lH=\Cr$ we shall often use an alternative notation:
$\scp{\mathsf{x}}{\mathsf{y}} = \mathsf{x}\cdot\mathsf{y}$,
$\mathsf{x}, \mathsf{y}\in\Cr$. 
We shall denote any norm on a finite-dimensional space by $|\cdot|$, in order to distinguish it from the norm of the Hilbert space.
The topological (anti)dual 
$\lH'$ will be identified with $\lH$ by means of the usual duality 
(the Riesz representation theorem).
For any Banach space $\lX$ by $\dup{\lX'}\cdot\cdot \lX$ 
we denote the corresponding dual product between 
$\lX$ and its (anti)dual $\lX'$.

For a densely defined linear operator $A:\lH\to \lH$ we denote by 
$\dom A$, $\ker A$, $\ran A$, $\overline{A}$, $A^\ast$ 
its \emph{domain}, \emph{kernel}, \emph{range} (or \emph{image}), 
\emph{closure} (if it exists), and \emph{adjoint}, respectively.
In addition we use $\rank A= \dim \ran A$.
For $S\subseteq \lH$, the \emph{restriction} of $A$ to $S$ is denoted 
by $A|_S$.
For two linear operators $A, B$ in $\lH$ by $A\subseteq B$
we mean that $\dom A\subseteq \dom B$ and $B|_{\dom A}=A$.
%By $\scp{\cdot}{\cdot}_A := \scp\cdot\cdot
%+ \scp{A\,\cdot}{A\,\cdot}$ we denote the 
%\emph{graph scalar product}, while the corresponding norm $\|\cdot\|_A:=\sqrt{\scp{\cdot}{\cdot}_A}$ 
%is called the \emph{graph norm}. 
If $A=A^\ast$, 
then $A$ is said to be \emph{self-adjoint}, while the infimum of its spectrum is called the \emph{bottom}. 
The \emph{identity} operator is denoted by $\mathbbm{1}$.
For a \emph{direct} sum between two vector spaces we use the symbol $\dotplus$.

For any complex number $z\in\C$ we denote by $\Re z$ and $\Im z$
the real and the imaginary part of $z$, respectively.

\section{Introduction}\label{sec:abstractFO}
Let us briefly recall the properties of Friedrichs operators and prepare necessary ingredients to obtain the results of this paper. We briefly recall the theory in this section, while for more details, we refer to \cite{EGC, ABcpde, AEM-2017}. Let $(T, \widetilde T)$ be a joint pair of abstract Friedrichs operators on a Hilbert space $\lH$ as in Definition \ref{def:abstractFO}. Let $T_0:=\overline T$ and $\widetilde{T}_0:=\overline{\widetilde{T}}$ be the closures with $\dom T_0 = \dom \widetilde{T}_0 =: \lW_0$. Then the pair $(T_0,\widetilde{T}_0)$ is also a joint pair of abstract Friedrichs operators on $\lH$. Moreover, we define $T_1:= \widetilde{T}^*$, $\widetilde{T}_1: =T^*$ with $\dom T_1=\dom\widetilde{T}_1=:\lW$ (these follow easily from assumption (T1)--(T2), see e.g.~\cite{AEM-2017, EGC}). We summarise some properties in the following theorem (an overview of the theory can be found in \cite[Theorem 2.2]{ES23}). 

\begin{theorem}\label{thm:abstractFO}
    Let $(T_0, \widetilde{T}_0)$ be a joint pair of closed abstract Friedrichs operators on a Hilbert space $\lH$. Then the following results hold.
\begin{enumerate}
\item[\rm{(i)}] The graph norms $\|\cdot\|_{T_1}:= \|\cdot\|+\|T_1\cdot\|$ and $\|\cdot\|_{\widetilde{T}_1}:= \|\cdot\|+\|\widetilde{T}_1\cdot\|$ are equivalent, the graph space $(\lW, \|\cdot\|_{T_1})$ is a Hilbert space and $\lW_0$ is a closed subspace of $\lW$.
    
\item[\rm{(ii)}] The operator $\overline{(T_0+\widetilde{T}_0)}$ is everywhere defined, bounded, self-adjoint operator with strictly positive bottom and it coincides with $T_1+\widetilde{T}_1$ on $\lW$.

\item[\rm{(iii)}]  The continuous linear map defined by,
\begin{align}
& D : (\lW,\|\cdot\|_{T_1}) \to (\lW,\|\cdot\|_{T_1})' \nonumber \\
    \iscp uv \;:=\; & \dup{\lW'}{Du}{v}\lW \;:=\; \scp{T_1u}{v} 
        - \scp{u}{\widetilde{T}_1v} \;,
        \quad u,v\in\lW \;, \label{eq:D}
    \end{align}
    is called the \emph{boundary operator} associated with the pair $(T,\widetilde{T})$. Moreover, $\ker D= \lW_0$, $\lW_0^{[\perp]}= \lW$ and $\lW^{[\perp]}= \lW_0$. Here, $S^{[\perp]}$ is the orthogonal complement of a subset $S$ of $\lW$ with respect to $\iscp{\cdot}{\cdot}$, defined by
    \begin{align*}
        S^{[\perp]}:= \{u\in \lW: \ \iscp{u}{v}=0, \  v\in S\}.
    \end{align*}
It is closed in $\lW$.
\item[\rm{(iv)}] A subspace $\lV$ of $\lW$ which contains $\lW_0$
is closed in $\lW$ (i.e.~with respect to the graph norm)
if and only if $\lV=\lV^{[\perp][\perp]}$.

\item[\rm{(v)}] $(\lW, \iscp{\cdot}{\cdot})$ is an indefinite inner product space. 

\item[\rm{(vi)}] The following decomposition holds
\begin{equation}\label{eq:decomposition}
    \lW\;=\;\lW_0\;\dotplus\; \ker T_1\;\dotplus\;\ker\widetilde{T}_1\;.
\end{equation}
The non-orthogonal projectors $p_0,\ p_{\rm{k}}$ and $p_{\rm{\tilde k}}$ (corresponding to \eqref{eq:decomposition}) over $\lW_0,\ \ker T_1$ and $\ker \widetilde{T}_1$, respectively, are continuous linear operators with respect to the graph norm.
\item[\rm{(vii)}] The subspaces $\lW_0, \ker T_1$ and $\ker \widetilde{T}_1$ are mutually $\iscp{\cdot}{\cdot}-$orthogonal.
\item[\rm{(viii)}] The spaces $(\ker T_1,-\iscp{\cdot}{\cdot})$ and $(\ker \widetilde{T}_1,\iscp{\cdot}{\cdot})$ are inner product spaces and the quotient space $\lW/\lW_0$ is isomorphic to $\ker T_1\dot{+}\ker\widetilde{T}_1$.
\end{enumerate}
    
\end{theorem}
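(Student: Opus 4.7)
The theorem packages the operator-theoretic skeleton of the abstract framework into a single omnibus statement; the plan is to verify each of the eight items in order, with the arguments compressed to their essential mechanisms (the details are standard and can be found in \cite{AEM-2017, ES23}). Throughout, the two analytic workhorses are the bound $\|(T+\widetilde T)\phi\| \leq c\|\phi\|$ from (T2) and the positivity $\scp{(T+\widetilde T)\phi}{\phi} \geq 2\mu_0\|\phi\|^2$ from (T3); combined with (T1) extended from $\mathcal D$ to $\lW$ by density, these deliver the integration-by-parts formula
\[
\iscp u v \;=\; \scp{T_1 u}{v} - \scp{u}{\widetilde T_1 v}, \qquad u, v \in \lW,
\]
which is the hub through which every later item passes.

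Items (i)--(iii) are squeezed out of (T1)--(T3) by soft arguments. For (i), the triangle inequality applied to (T2) gives $\|T\phi\| \leq \|\widetilde T\phi\| + c\|\phi\|$ on $\mathcal D$ and its symmetric counterpart; propagating these inequalities to the adjoints $T_1 = \widetilde T^*$ and $\widetilde T_1 = T^*$ yields graph-norm equivalence on $\lW$, while completeness of $(\lW, \|\cdot\|_{T_1})$ and closedness of $\lW_0$ in $\lW$ come from the closedness of $T_1$ and $T_0$ respectively. For (ii), the bound (T2) together with density of $\mathcal D$ in $\lH$ extends $T + \widetilde T$ to an everywhere-defined bounded self-adjoint operator $B$ on $\lH$, whose bottom is $\geq 2\mu_0$ by (T3); agreement with $T_1 + \widetilde T_1$ on $\lW$ is verified by a standard graph-norm approximation. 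Item (iii) is then immediate from the integration-by-parts formula: $D$ is continuous by graph-norm equivalence, the equation $Du = 0$ characterises $u$ as lying in $\dom T^{**} = \dom \overline T = \lW_0$, and the two $[\perp]$-identities follow from $\ker D = \lW_0$ together with the observation that $\iscp u v = 0$ whenever $v \in \lW_0$ (itself an adjoint-identity approximation from $\mathcal D$). Items (iv) and (v) are then formal consequences: (iv) is the standard characterisation of closed subspaces in an indefinite inner product space whose radical is exactly $\lW_0$, while (v) reduces to Hermitian symmetry $\iscp u v = \overline{\iscp v u}$, which is a direct computation using that $T_1 + \widetilde T_1$ coincides on $\lW$ with the bounded self-adjoint operator $B$ from (ii).

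The substance is concentrated in (vi), which is the Kre\u\i n-space-theoretic heart of the theory and the step I expect to be the main obstacle. The approach is to recognise $(\lW/\lW_0, \iscp\cdot\cdot)$ as a non-degenerate indefinite inner product space (non-degeneracy coming from $\ker D = \lW_0$), to identify the images of $\ker T_1$ and $\ker \widetilde T_1$ in the quotient as maximal $\iscp\cdot\cdot$-negative and maximal $\iscp\cdot\cdot$-positive subspaces respectively, and then to invoke a fundamental decomposition; the technical heart is verifying that these images are indeed maximal definite and together span the quotient, which is where the strict positivity (T3) enters essentially, through the invertibility of $B$ on $\lH$. Once this is in hand, directness of the sum and the mutual $\iscp\cdot\cdot$-orthogonality (vii) are one-line checks from the integration-by-parts formula: for instance, $\ker T_1 \cap \lW_0 = \{0\}$ follows because any $u$ in the intersection satisfies $\iscp u u = 0$ and, by (T3) applied along an approximating sequence from $\mathcal D$, must vanish; for $u \in \ker T_1$ and $v \in \ker \widetilde T_1$ the formula gives $\iscp u v = 0$ directly. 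Continuity of the projectors $p_0, p_{\rm k}, p_{\rm{\tilde k}}$ then drops out of the closed graph theorem once the three summands are known to be closed in $\lW$. Finally, (viii) is a corollary: $\iscp\cdot\cdot$-definiteness on each of $\ker T_1$ and $\ker \widetilde T_1$ comes from inserting a pure-summand element into the identity linking $\iscp\cdot\cdot$ to $B$ and applying (T3), while the isomorphism $\lW/\lW_0 \cong \ker T_1 \dotplus \ker \widetilde T_1$ is read off directly from the decomposition.
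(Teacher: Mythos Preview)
The paper does not supply a proof: immediately after the statement it simply records where each item is established in the literature --- (i)--(iii) in \cite{AEM-2017, EGC} (with \cite{ABCE} for the complex setting), (iv)--(v) in \cite{ABcpde}, and (vi)--(viii) in \cite{ES22}. Your proposal goes further by sketching the actual mechanisms, so there is no line-by-line comparison to make against the paper itself; what can be said is that your sketch is broadly consistent with how those external sources argue, and your identification of (vi) as the load-bearing step is accurate.

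One caution on your treatment of (vi): phrasing it as ``identify the images of $\ker T_1$ and $\ker\widetilde T_1$ as maximal definite subspaces and then invoke a fundamental decomposition'' risks circularity, since knowing that the quotient $\lW/\lW_0$ is a Kre\u\i n space (so that a fundamental decomposition is available to invoke) is essentially equivalent to the decomposition you are after. The route taken in the cited sources is constructive rather than abstract: coercivity (T3) yields $\Re\scp{T_0 u}{u}\geq\mu_0\|u\|^2$ and hence $\|T_0 u\|\geq\mu_0\|u\|$, so $T_0$ is injective with closed range and $\lH=\ran T_0\oplus\ker\widetilde T_1$; from this and its tilde-analogue one assembles the splitting of $\lW$ by hand, solving $T_1 v=T_1 u$ with $v\in\lW_0\dotplus\ker\widetilde T_1$ so that $u-v\in\ker T_1$. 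Your pointer to the invertibility of the symmetric part $B$ is indeed the ingredient that makes that solving step go through --- the point is just that the spanning is established explicitly rather than by appeal to an off-the-shelf Kre\u\i n-space theorem.
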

The statements (i)--(iii) can be found in \cite{AEM-2017, EGC} (see also \cite{ABCE} for the complex setting). For (iv), (v) and a further structure 
of indefinite inner product space $(\lW, \iscp{\cdot}{\cdot})$ we refer to 
\cite{ABcpde}. The results (vi), (vii) and (viii) can be found in \cite{ES22}.

\begin{definition}[(V)-boundary conditions]\label{dfn:V1-V2}
    Let $(T_0,\widetilde{T}_0)$ be a joint pair of closed abstract Friedrichs operators on a Hilbert space $\lH$. A pair of subspaces $(\lV, \widetilde \lV)$ of the graph space $\lW$ is said to allow (V)-\emph{boundary conditions} related to $(T,\widetilde T)$ if the following conditions are satisfied:
    \begin{itemize}
        \item[(V1)] The boundary operator has \emph{opposite signs} on these subspaces, i.e.
        \begin{align*}
            & (\forall u\in \lV)\qquad \iscp{u}{u}\;\geq\; 0\,,\\
            & (\forall v\in \widetilde \lV) \qquad \iscp{v}{v}\;\leq\; 0\;.
        \end{align*}
        
        \item[(V2)] The subspaces $\lV, \widetilde \lV$ are $\iscp{\cdot}{\cdot}$-orthogonal complements to each other, i.e.
        \begin{align*}
            \lV \;=\; \widetilde \lV^{[\perp]} \quad \mathrm{and} \quad \widetilde \lV \;=\; \lV^{[\perp]}\;.
        \end{align*}
    \end{itemize}
\end{definition}

\begin{theorem}\label{thm:well-posedness}
     Let $(T_0, \widetilde{T}_0)$ be a joint pair of closed abstract Friedrichs operators on a Hilbert space $\lH$.
\begin{itemize} 
    \item[\rm{(i)}] If a pair $(\lV,\widetilde{\lV})$ satisfies \emph{(V)}-boundary conditions, then  
    \begin{align*}
        T_1|_{\lV}:\lV\to \lH \quad \mathrm{and}\quad \widetilde{T}_1|_{\widetilde{\lV}}:\widetilde{\lV}\to \lH
    \end{align*}
    are bijective.
    \item[\rm{(ii)}] If both $\ker T_1\neq \{0\}$ and 
    $\ker\widetilde{T}_1\neq \{0\}$, then the pair
    $(T,\widetilde{T})$ admits uncountably many mutually adjoint
    pairs of bijective realisations relative to $(T,\widetilde{T})$.
    On the other hand, if either $\ker T_1=\{0\}$ or
    $\ker \widetilde{T}_1=\{0\}$, then there is exactly one mutually 
    adjoint pair of bijective realisations relative to
    $(T,\widetilde{T})$. Such a pair is precisely
    $(T_1,\widetilde T)$ when $\ker T_1=\{0\}$, 
    and $(T,\widetilde{T}_1)$ when $\ker \widetilde{T}_1=\{0\}$.
    \item[\rm{(iii)}] Let $\lV $ be a closed subspace  of  $\lW$ such that $\lW_0\subseteq \lV$. Then $T_1|_{\lV}$ is bijective
 if and only if\/ $\lV\dot{+}\ker T_1=\lW$. 
 \item[\rm{(iv)}] $(\lW_0\dotplus \ker\widetilde{T}_1,\lW_0\dotplus\ker T_1)$ satisfies \emph{(V)}-boundary conditions.
 \item[\rm{(v)}]
 There exists $\lV\subseteq\lW$ such that $(\lV,\lV)$ satisfies \emph{(V)}-boundary conditions if and only if $\ker T_1$ and $\ker \widetilde T_1$ are isomorphic.
\end{itemize}
\end{theorem}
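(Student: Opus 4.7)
The plan is to recognise that $(\lV,\lV)$ satisfying (V)-boundary conditions is equivalent to $\lV$ being a \emph{hyper-maximal neutral} subspace of the indefinite inner product space $(\lW,\iscp{\cdot}{\cdot})$, and then to use the fundamental decomposition \eqref{eq:decomposition} to translate the existence of such an $\lV$ into the isomorphism of $\ker T_1$ and $\ker\widetilde T_1$. First, taking $\widetilde\lV=\lV$ in (V1) forces $\iscp{u}{u}=0$ for every $u\in\lV$; polarisation of the Hermitian form $\iscp{\cdot}{\cdot}$ upgrades this to $\lV\subseteq\lV^{[\perp]}$, and combining with (V2) yields $\lV=\lV^{[\perp]}$. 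Since $\lW_0$ lies in the radical of $\iscp{\cdot}{\cdot}$ by Theorem~\ref{thm:abstractFO}(iii), $\lW_0\subseteq\lV^{[\perp]}=\lV$, and \eqref{eq:decomposition} permits writing $\lV = \lW_0\dotplus\lV'$ with $\lV'\subseteq \ker T_1\dotplus\ker\widetilde T_1$.

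For the forward direction, I would set $\pi_{\rm k}:=p_{\rm k}|_{\lV'}$ and $\pi_{\rm\tilde k}:=p_{\rm\tilde k}|_{\lV'}$. Neutrality of $\lV$ together with mutual orthogonality from Theorem~\ref{thm:abstractFO}(vii) gives
\[
-\iscp{\pi_{\rm k}v}{\pi_{\rm k}v} \;=\; \iscp{\pi_{\rm\tilde k}v}{\pi_{\rm\tilde k}v}, \qquad v\in\lV',
\]
and by Theorem~\ref{thm:abstractFO}(viii) both sides are positive-definite Hilbert norms squared on the respective kernels. This makes $\pi_{\rm k}$, $\pi_{\rm\tilde k}$ injective and turns $\Phi := \pi_{\rm\tilde k}\circ \pi_{\rm k}^{-1}$ into an isometric isomorphism of $\pi_{\rm k}(\lV')\subseteq\ker T_1$ onto $\pi_{\rm\tilde k}(\lV')\subseteq\ker\widetilde T_1$. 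To establish density of $\pi_{\rm\tilde k}(\lV')$ in $\ker\widetilde T_1$, I would take $y\in\ker\widetilde T_1$ with $\iscp{y}{\pi_{\rm\tilde k}v'}=0$ for all $v'\in\lV'$; then mutual orthogonality gives $y\in\lV^{[\perp]}=\lV$, and the neutrality plus positive-definiteness on $\ker\widetilde T_1$ forces $y=0$. The symmetric argument handles $\pi_{\rm k}(\lV')$. Extending $\Phi$ by continuity then produces the desired Hilbert-space isomorphism $\ker T_1\to\ker\widetilde T_1$.

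For the converse, starting from any isometric isomorphism $U:(\ker T_1,-\iscp{\cdot}{\cdot})\to(\ker\widetilde T_1,\iscp{\cdot}{\cdot})$ (whose existence is equivalent to the Hilbert-space isomorphism of the two kernels), I would set
\[
\lV := \lW_0 \;\dotplus\; \{x+Ux : x\in \ker T_1\}.
\]
Neutrality is an immediate calculation using $\iscp{Ux}{Ux}=-\iscp{x}{x}$ and mutual orthogonality (vii). Closedness in the graph norm comes from closedness of $\lW_0$ and of the graph of the bounded operator $U$, combined with continuity of the projectors in Theorem~\ref{thm:abstractFO}(vi). For the nontrivial inclusion $\lV^{[\perp]}\subseteq\lV$, decomposing $u\in\lV^{[\perp]}$ via \eqref{eq:decomposition} and testing $\iscp{u}{v}=0$ against $v=w_0+x+Ux$ as $x$ ranges over $\ker T_1$, Riesz representation in $(\ker T_1,-\iscp{\cdot}{\cdot})$ forces $p_{\rm\tilde k}u = Up_{\rm k}u$, i.e.~$u\in\lV$.

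The main obstacle I expect is the surjectivity (density and closedness) of $\pi_{\rm k}$, $\pi_{\rm\tilde k}$ in the forward direction: neutrality alone delivers only an isometry onto the image, and upgrading the image to the whole kernels requires both the hyper-maximality $\lV=\lV^{[\perp]}$ and the positive-definiteness of $\pm\iscp{\cdot}{\cdot}$ on the two definite summands (plus, in infinite dimensions, the equivalence between the $\iscp{\cdot}{\cdot}$-induced and graph norms on $\ker T_1$ and $\ker\widetilde T_1$, so that the continuous extension of $\Phi$ is genuinely a Hilbert-space isomorphism).
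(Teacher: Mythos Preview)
The paper does not give a proof of Theorem~\ref{thm:well-posedness} at all: immediately after the statement it simply records that parts (i)--(v) are taken from \cite{EGC,ABCE}, \cite{AEM-2017}, \cite{ES22}, and \cite{ES23}, respectively. So there is no in-paper argument to compare against; what you have written is a self-contained proof of part (v) only, which in the paper is delegated to \cite{ES23}.

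Your argument for (v) is sound and follows the natural Kre\u\i n-space route: identify $(\lV,\lV)$ satisfying (V) with a hypermaximal neutral subspace, push it through the decomposition $\lW=\lW_0\dotplus\ker T_1\dotplus\ker\widetilde T_1$, and read off an isometry between the two definite components. The forward direction (injectivity of $\pi_{\rm k},\pi_{\rm\tilde k}$ from neutrality plus definiteness, density of the images from $\lV=\lV^{[\perp]}$) and the converse (building $\lV$ as the ``graph'' $\lW_0\dotplus\{x+Ux\}$) are both correct. The one technical point you rightly isolate as the obstacle --- that the $\pm\iscp{\cdot}{\cdot}$-norms on $\ker T_1$ and $\ker\widetilde T_1$ are equivalent to the graph norm, so that the kernels are genuinely Hilbert spaces in these inner products and your extension/closedness arguments go through --- is settled by Theorem~\ref{thm:abstractFO}(ii): for $v\in\ker\widetilde T_1$ one has $\iscp{v}{v}=\scp{T_1v}{v}=\scp{(T_1+\widetilde T_1)v}{v}$, and (T2)--(T3) give $2\mu_0\|v\|^2\le\iscp{v}{v}\le c\|v\|^2$, while on the kernel the graph norm collapses to $\|v\|+\|(T_1+\widetilde T_1)v\|\le(1+c)\|v\|$; the analogous computation holds on $\ker T_1$. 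With this in hand your density argument upgrades to surjectivity and the graph of $U$ is closed in the graph topology, so both directions close.

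The only genuine gap is scope: you have addressed part (v) but said nothing about (i)--(iv). Since the paper itself treats the entire theorem as a list of citations, this is not a mathematical defect in your argument for (v), but you should be explicit that (i)--(iv) are being quoted rather than proved.
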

Statement (i) is proved in \cite{EGC} for real vector spaces and for complex vector spaces we refer to \cite{ABCE}. For (ii), we refer to \cite{AEM-2017}. Results (iii) and (iv) can be found in \cite{ES22}.
Finally, the characterisation given in (v) is obtained in \cite{ES23}.

Before moving to the main results of this paper, let us briefly present how the abstract setting encompasses the classical setting, while for details we refer to \cite[Subsection 5.1]{EGC}. A special attention is made on the one dimensional $(d=1)$ vectorial $(r\in \N)$ setting, which is in our focus for the rest of the paper.

\begin{example}[Classical is abstract]\label{ex:cfo}
Let $d,r\in\N$ and $\Omega\subseteq\Rd$ be an open and bounded set with 
Lipschitz boundary $\Gamma$. We consider the restriction of operator $L$ given by (CFO) to $C^\infty_c(\Omega;\Cr)$ 
and denote it by $T$, i.e.
$$
T\vu =  \sum_{k=1}^d \partial_k(\mA_k \vu) + \mB \vu \,,
    \quad \vu\in C^\infty_c(\Omega;\Cr)
$$
(here the derivatives can be understood in the classical sense 
as derivatives of smooth functions are equal to their distributional
derivatives).
Since $\mB \in L^\infty(\Omega; \Mat{r})$ and 
$\mA_k \in W^{1,\infty}(\Omega;\Mat{r})$ (for any $k$),
it is obvious that $T:C^\infty_c(\Omega;\Cr)\to L^2(\Omega;\Cr)$.

For the second operator we take 
$\widetilde T:C^\infty_c(\Omega;\Cr)\to L^2(\Omega;\Cr)$ given by
$$
\widetilde T\vu =  -\sum_{k=1}^d \partial_k(\mA_k \vu) + 
    \Bigl(\mB^* + \sum_{k=1}^d \partial_k\mA_k \Bigr)\vu \,,
    \quad \vu\in C^\infty_c(\Omega;\Cr) \,.
$$
Then one can easily see that $(T,\widetilde T)$ is a joint pair of
abstract Friedrichs operators, where $\lH=L^2(\Omega;\Cr)$
and $\lD=C^\infty_c(\Omega;\Cr)$.
Indeed, (T1) is obtained by integration by parts and using (F1), 
the boundedness of coefficients implies (T2), while (T3) follows from (F2)
(a more general case where $\lH$ is taken 
to be a closed subspace of $L^2(\Omega;\Cr)$ can be found in 
\cite[Example 2]{ABCE}).

The domain of adjoint operators $T_1=\widetilde T^*$
and $\widetilde T = T^*$ (the graph space) reads
\begin{equation*}
\begin{aligned}
\lW &= \Bigl\{\vu\in L^2(\Omega;\Cr) :  
    \sum_{k=1}^d \partial_k(\mA_k \vu) + \mB \vu\in L^2(\Omega;\Cr)\Bigr\} \\
&= \Bigl\{\vu\in L^2(\Omega;\Cr) :  
    \sum_{k=1}^d \partial_k(\mA_k \vu) \in L^2(\Omega;\Cr)\Bigr\} \,.
\end{aligned}
\end{equation*}
The action of $T_1$ and $\widetilde{T}_1$ is (formally) the same as the action
of $T$ and $\widetilde T$, respectively (we have just that the classical 
derivatives are replaced by the distributional ones). 
It is known that $C^\infty_c(\Rd;\Cr)$ is dense in $\lW$ 
\cite[Theorem 4]{ABmc} (cf.~\cite[Chapter 1]{MJensen})
and that the boundary operator, for $\vu,\vv\in C^\infty_c(\Rd;\Cr)$,
is given by
\begin{equation*}
\iscp{\vu}{\vv} = \int_\Gamma \mA_\mnu (\mx)\vu|_\Gamma(\mx)
    \cdot\vv|_\Gamma(\mx)\,dS(\mx) \;,
\end{equation*}
where $\mA_\mnu:=\sum_{k=1}^d\nu_k\mA_k$ and 
$\mnu=(\nu_1,\nu_2,\dots,\nu_d)\in L^\infty(\Gamma;\Rd)$ is the unit
outward normal on $\Gamma$. By the definition, we have that the domain of closures
$T_0=\overline{T}$ and $\widetilde{T}_0=\overline{\widetilde{T}}$
is given by $\lW_0=\cl_\lW C^\infty_c(\Omega;\Cr)$, 
while by Theorem \ref{thm:abstractFO}(iii) and the identity above we have
\begin{equation*}
\begin{aligned}
\lW_0\cap C^\infty_c(\Rd;\Cr) = \Bigl\{\vu\in C^\infty_c(\Rd;\Cr) :  
    (\forall \vv &\in C^\infty_c(\Rd;\Cr)) \\
&\int_\Gamma \mA_\mnu (\mx)\vu|_\Gamma(\mx)
    \cdot\vv|_\Gamma(\mx)\,dS(\mx) = 0 \Bigr\} \;.
\end{aligned}
\end{equation*}
A more specific characterisation involving the trace operator
on the graph space can be found in \cite{ABmc, MJensen}.

In the one-dimensional case ($d=1$) for $\Omega=(a,b)$, $a<b$, the graph space simplifies to
\begin{equation}
\lW\;=\; \bigl\{ \vu\in\lH : (\mA \vu)'\in\lH\bigr\} \;,
\end{equation}
while the graph norm is (equivalent to)
\begin{align}
    \|\cdot\|_{T_1}\;=\; \|\cdot\|+ \|(\mA \;\cdot\,)'\|\;.
\end{align}
The boundary operator $D$ is given by
\begin{equation}\label{eq:D-scalar1d}
\iscp{\vu}{\vv}= \bigl(\mA \vu\cdot  \vv\bigr)(b) -
    \bigl(\mA \vu\cdot  \vv\bigr)(a) \;, \quad \vu,\vv\in C^\infty_c((a,b);\C^r)\;,
\end{equation}
and the minimal domain is described as
\begin{equation}\label{eq:W0}
    \lW_0\;=\;\{\vu\in \lW : (\mA \vu)(a)=(\mA\vu)(b)=0 \}\;.
\end{equation}

\end{example}

\section{total projections}\label{sec:total projections}
    
The study and classification of all the boundary conditions has been done for one-dimensional ($d=1$) scalar ($r=1$) case in full generality \cite{ES22}.
A generalisation to the vectorial case (still in one dimension), becomes particularly challenging, due to the non-smoothness of eigenvectors of smooth matrices. We overcome this difficulty in this section and realise the connection between the kernels and ranks of the coefficient matrix  $\mA(x)$ at end points $a$ and $b$ of the interval $(a,b)$.
Also, we relate this information to the existence of bijective realisations $(T_1|_\lV,\widetilde{T}_1|_{\widetilde\lV})$ such that $\lV=\widetilde \lV$. 

In this section and in the rest of the manuscript we study 
(CFO) in the one-dimensional ($d=1$) vectorial ($r\in \mathbb{N}$) case. 
For the domain we take an open interval $\Omega=(a,b)$,
$a<b$. Then $\lD=C^\infty_c((a,b);\C^r)$ and $\lH=L^2((a,b);\C^r)$.
We adjust the notation of $T, \widetilde T:\lD\to\lH$
given in Example \ref{ex:cfo} in the following way:
\begin{equation}\label{eq:TTtilda}
T\vu := (\mA \vu )'+\mB \vu
	\qquad \hbox{and} \qquad
	\widetilde{T}\vu:=-(\mA \vu)'
	+(\mB^*+\mA ')\vu \;,
\end{equation}
where $\mA\in W^{1,\infty}((a,b);\Mat{r})$,
$\mB\in L^\infty((a,b);\C^r)$ and for some $\mu_0>0$
we have $\mB^*+\mB  +\mA '\geq 2\mu_0\mathbbm{1}>0$
($\mathbbm{1}$ is the identity matrix
and $'$ the derivative). 

It is commented in Example \ref{ex:cfo} that 
$(T, \widetilde T)$ 
is a joint pair of abstract Friedrichs operators.
Moreover, the graph space is given by 
\begin{equation*}
\lW \;=\; \bigl\{ \vu\in\lH : (\mA \vu)'\in\lH\bigr\} \;,
\end{equation*}
while the graph norm is equivalent to 
$\|\vu\|_{\mathcal{W}}:=\|\vu\|+\|(\mA \vu)'\|$
($\|\cdot\|$ stands, as usual, for the norm on $\lH$
induced by the standard inner product, i.e.~the $L^2$ norm
on $(a,b)$).
In fact, $\vu\in\lH$ belongs to $\lW$ if and only if 
$\mA \vu\in H^1((a,b);\C^r)$. Thus, by the standard Sobolev 
embedding theorem (see e.g.~\cite[Theorem 8.2]{Brezis})
for any $\vu\in\lW$ we have $\mA \vu\in C([a,b];\C^r)$.
This in particular implies that for any $\vu\in\lW$ and
$x\in[a,b]$
evaluation $(\mA \vu)(x)$ is well defined.
On the other hand, $\mA(x)\vu(x)$ is not necessarily
meaningful as $\vu$ itself is not necessarily continuous.
However, we can define this evaluation map as a consequence of the following lemma.

Let $\mP_{\lambda}(x_0)$ denotes the total projection corresponding to the eigenvalue $\lambda = \lambda(x_0)$ of $\mA(x_0)$. As mentioned in Appendix, for any 
$x_0\in [a,b]$ there exists $\varepsilon =:\varepsilon(x_0) >0$, such that the operator $\mP_{\lambda}(x_0)$ is Lipschitz continuous in the interval $[a,b]\cap [x_0-\varepsilon, x_0+\varepsilon] =: I_{\lambda,x_0}$. Which means $\mP_{\lambda}\in W^{1,\infty}(I_{\lambda,x_0};\C^{r\times r})$.

Also, eigenvalues $\lambda(x)$ of the matrix $\mA(x)$ are in $W^{1,\infty}((a,b),\R)$.

\begin{lemma}\label{lm:smth}
    Let $\lambda(x)$ be an eigenvalue of $\mA(x)$ and $x_0\in [a,b]$, such that $\lambda:=\lambda(x_0)\neq 0$. Then there exists $\varepsilon >0$ such that for any $\vu\in \lW$, 
    \begin{align*}
        (\mP_{\lambda}\vu)|_{I_{\lambda,x_0}}\in H^1(I_{\lambda,x_0};\C^r)\,,
    \end{align*}
    where $I_{\lambda,x_0}=[a,b]\cap [x_0-\varepsilon, x_0+\varepsilon]$.
\end{lemma}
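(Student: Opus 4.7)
The crux of the argument is the algebraic identity
\[
\mA(x)\mP_\lambda(x) \;=\; \lambda(x)\mP_\lambda(x) \;=\; \mP_\lambda(x)\mA(x),
\]
valid on a neighbourhood of $x_0$. It holds because $\mA(x)$ is Hermitian by (F1); for self-adjoint matrices the Kato total projection coincides with the orthogonal projection onto the eigenspace $\ker(\mA(x)-\lambda(x)\mathbbm{1})$, so no nilpotent remainder appears on $\ran \mP_\lambda(x)$. The strategy is to use this identity, together with the invertibility of $\lambda(x)$ near $x_0$, to express $\mP_\lambda \vu$ in terms of the globally $H^1$-regular object $\mA \vu$, from which the claim follows by a product-rule computation.

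\textbf{Step 1 (localisation).} Recall from the Appendix that there exists $\varepsilon_1>0$ for which $\mP_\lambda \in W^{1,\infty}(I^1;\C^{r\times r})$, where $I^1 := [a,b]\cap [x_0-\varepsilon_1, x_0+\varepsilon_1]$. Since $\lambda \in W^{1,\infty}((a,b);\R)$ is in particular continuous and $\lambda(x_0)\neq 0$, I choose $\varepsilon \in (0,\varepsilon_1]$ small enough that $|\lambda(x)| \geq \tfrac12 |\lambda(x_0)|$ on $I_{\lambda,x_0} := [a,b]\cap [x_0-\varepsilon, x_0+\varepsilon]$. Then $1/\lambda \in W^{1,\infty}(I_{\lambda,x_0};\R)$.

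\textbf{Step 2 (pointwise identity).} Applying the commutation identity pointwise, for a.e.~$x\in I_{\lambda,x_0}$,
\[
\lambda(x)\,\mP_\lambda(x)\,\vu(x) \;=\; \mP_\lambda(x)\,\mA(x)\,\vu(x) \;=\; \mP_\lambda(x)\,(\mA \vu)(x),
\]
whence
\[
(\mP_\lambda \vu)(x) \;=\; \frac{1}{\lambda(x)}\,\mP_\lambda(x)\,(\mA \vu)(x) \qquad \text{a.e.~on } I_{\lambda,x_0}.
\]
The right-hand side is well defined for \emph{every} $x\in I_{\lambda,x_0}$, because $\mA \vu \in H^1((a,b);\C^r) \hookrightarrow C([a,b];\C^r)$.

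\textbf{Step 3 (regularity).} By definition of $\lW$, $\mA \vu \in H^1((a,b);\C^r)$. The matrix-valued factor $\tfrac{1}{\lambda}\mP_\lambda$ belongs to $W^{1,\infty}(I_{\lambda,x_0};\C^{r\times r})$ by Step 1. The product of a $W^{1,\infty}$ matrix field with an $H^1$ vector field is again $H^1$: the product rule yields a derivative consisting of an $L^\infty$ factor times an $L^2$ factor plus an $L^\infty$ factor times an $L^2$ factor, each of which lies in $L^2(I_{\lambda,x_0})$. Therefore the right-hand side of the above identity is an $H^1(I_{\lambda,x_0};\C^r)$ function, and hence so is $(\mP_\lambda \vu)|_{I_{\lambda,x_0}}$.

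\textbf{Main obstacle.} The only genuinely subtle point is justifying the clean identity $\mA \mP_\lambda = \lambda \mP_\lambda$ in Step 2, which is what lets us invert $\mA$ on the range of the projection by simple division. This step relies essentially on the Hermitian assumption (F1): for a non-self-adjoint coefficient one would in general only have $\mA \mP_\lambda = \lambda \mP_\lambda + N_\lambda$ with a non-trivial nilpotent part $N_\lambda$, and the inversion would not be available. Once this structural fact is in hand, the rest is bookkeeping with the product rule and the continuity of $\lambda$.
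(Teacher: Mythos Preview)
Your Step 2 contains a genuine gap. The identity $\mA(x)\mP_\lambda(x)=\lambda(x)\mP_\lambda(x)$ is \emph{false} in general. By definition, $\mP_\lambda(x)=\sum_{i\in I(x)}\mP_i(x)$ is the total projection for the whole $\lambda$-group, so
\[
\mA(x)\mP_\lambda(x)\;=\;\sum_{i\in I(x)}\lambda_i(x)\,\mP_i(x)\,.
\]
If $\lambda(x_0)$ has multiplicity $m>1$, the branches $\lambda_i(x)$, $i\in I(x)$, may split for $x\neq x_0$, and then $\mA(x)\mP_\lambda(x)$ is not a scalar multiple of $\mP_\lambda(x)$ for any single function $\lambda(x)$. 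Your discussion of nilpotent parts is correct but misses the point: the obstruction here is eigenvalue splitting, not Jordan blocks. Note also that you cannot sidestep this by reinterpreting $\mP_\lambda$ as the eigenprojection for the single branch $\lambda(x)$, since precisely that object fails to be Lipschitz (this is the reason the paper works with total projections in the first place).

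Your strategy is easily repaired. Since every $\lambda_i(x)$, $i\in I(x)$, stays bounded away from $0$, the restriction of $\mA(x)$ to $\ran\mP_\lambda(x)$ is invertible, and its inverse extended by zero,
\[
\mQ_\lambda(x)\;:=\;-\frac{1}{2\pi i}\int_\Gamma \frac{1}{\xi}\,\mR(\xi,x)\,d\xi
\;=\;\sum_{i\in I(x)}\frac{1}{\lambda_i(x)}\,\mP_i(x)\,,
\]
lies in $W^{1,\infty}(I_{\lambda,x_0};\Mat r)$ by the same resolvent estimate used for $\mP_\lambda$ in the Appendix (here $\Gamma$ does not enclose $0$). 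Then $\mQ_\lambda\mA=\mP_\lambda$ gives $(\mP_\lambda\vu)(x)=\mQ_\lambda(x)(\mA\vu)(x)$ a.e.~on $I_{\lambda,x_0}$, and your Step~3 applies verbatim. For comparison, the paper's proof circumvents the issue differently: it derives the pointwise bound $|\mP_\lambda(x)\nu|^2\le\tfrac{4}{\lambda^2}|\mA(x)\nu|^2$ from the decomposition above, applies it to $\vu'$ for smooth $\vu$, and concludes by density of $C_c^\infty(\R;\C^r)$ in $\lW$. Once corrected, your route is arguably cleaner, as it yields an explicit $W^{1,\infty}\cdot H^1$ factorisation and avoids the density argument.
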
 
\begin{proof}
Let $\varepsilon>0$ such that $\mP_{\lambda}\in W^{1,\infty}(I_{\lambda,x_0};\C^{r\times r})$ and for any $x\in I_{\lambda,x_0}$, eigenvalues of $\mA(x)$ forming the $\lambda-$group (see Appendix) are $\lambda/2$ close to $\lambda$ (for simplicity, we assumed $\lambda>0$). Hence, for any $ x\in I_{\lambda,x_0}$ and $ i\in I(x)$, it holds  
\begin{align*}
    |\lambda_i(x)-\lambda|\leq \lambda/2 \iff \lambda_i(x)\in [\lambda/2,3\lambda/2]\,,
\end{align*}
where by $I(x)$, $x\in I_{\lambda,x_0}$, we denote the index set of eigenvalues forming the $\lambda-$group.
Let $x\in I_{\lambda,x_0}$. For an arbitrary $\vnu\in \C^r$, we have
\begin{align*}
    |\mP_{\lambda}(x)\vnu|^2
    &= (\mP_{\lambda}(x)\vnu)\cdot (\mP_{\lambda}(x)\vnu)\\
    &=\bigl(\sum_{i\in I(x)} \mP_{i}(x)\vnu\bigr)\cdot \bigl(\sum_{j\in I(x)} \mP_{j}(x)\vnu\bigr) \\   
    &= \sum_{i,j\in I(x)}\bigl( \mP_j^*(x)\mP_{i}(x)\vnu\bigr)\cdot \vnu\;.
\end{align*}
Here, $\mP_j(x)$ is the eigenprojection corresponding to the eigenvalue $\lambda_j(x)$. Applying $\mP^*_j(x)=\mP_j(x)$ and $\mP_i(x)\mP_j(x)=\delta_{i,j}\mP_i(x)$ (see Appendix), we get
\begin{equation}\label{eq:eigen-proj}
     |\mP_{\lambda}(x)\vnu|^2=\sum_{i\in I(x)} | \mP_{i}(x)\vnu|^2\;.
\end{equation}
Since, $\lambda/2\leq \lambda_i(x)$, and $\mP_i(x)$ and $\mA(x)$ commute (note that $\mA(x)$ is Hermitian), $i\in I(x)$, we have
\begin{align*}
     |\mP_{i}(x)\vnu|^2 \leq \frac{4}{\lambda^2}|\lambda_i(x)\mP_{i}(x)\vnu|^2
     = \frac{4}{\lambda^2}|\mA(x)\mP_{i}(x)\vnu|^2
     =\frac{4}{\lambda^2}|\mP_{i}(x)\mA(x)\vnu|^2\;.
\end{align*}
Replacing $\vnu$ by $\mA(x)\vnu$ in \eqref{eq:eigen-proj}, we obtain 
\begin{align*}
    \sum_{i\in I(x)} |\mP_{i}(x)\mA(x)\vnu|^2= |\mP_{\lambda}(x)\mA(x)\vnu|^2\leq |\mA(x)\nu|^2\,,
\end{align*}
where we used that $\mP_{\lambda}(x)$ is an orthogonal projection. Thus, for any $x\in I_{\lambda,x_0}$ and $\nu\in\C^r$ we have
\begin{align*}
    |\mP_{\lambda}(x)\vnu|^2\leq \frac{4}{\lambda^2}|\mA(x)\vnu|^2\;.
\end{align*}
Now, let us take $\vu \in C_c^{\infty}(\R; \C^r)$. Then, the above inequality holds for $\vu(x)$ as well as $\vu'(x)$ in place of $\vnu$. Integrating over $I_{\lambda,x_0}$ and taking the square root, we get
\begin{align*}
    \|\mP_{\lambda}\vu'\|_{L^2(I_{\lambda,x_0})}\leq \frac{2}{\lambda}\|\mA\vu'\|_{L^2(I_{\lambda,x_0})}\leq \frac{2(1+\|A'\|_{L^{\infty}(I_{\lambda,x_0})})}{\lambda}\|\vu\|_{\lW}\;.
\end{align*}
Hence,
\begin{align*}
    \|(\mP_{\lambda}\vu)'\|_{L^2(I_{\lambda,x_0})}\leq C\|\vu\|_{\lW}\;.
\end{align*}
Where, $C=2\max\{\|\mP_{\lambda}'\|_{L^{\infty}(I_{\lambda,x_0})}, \frac{2(1+\|A'\|_{L^{\infty}(I_{\lambda,x_0})})}{\lambda}\}$.
Since, $C_c^{\infty}(\R;\C^r)$ is dense in $\lW$, we conclude that for any $\vu\in \lW$, we have
\begin{align*}
    (\mP_{\lambda}\vu)|_{I_{\lambda,x_0}}\in H^1(I_{\lambda,x_0};\C^r)\;.
\end{align*}

\end{proof}

\begin{remark}\label{rem:h1loc}
 Using Sobolev embedding, we get $(\mP_{\lambda}\vu)|_{I_{\lambda,x_0}}\in C(I_{\lambda,x_0};\C^r)$ and so pointwise evaluation is well-defined. Of course, if for $x\in I_{\lambda,x_0}$ the evaluation $\vu(x)$ is well-defined, then 
 $$(\mP_{\lambda}\vu)(x)=\mP_{\lambda}(x)\vu(x)\,,$$
 where we used that $\mP_{\lambda}$ is continuous (see Appendix).

 Moreover, it is easy to see that for any $\varphi\in W^{1,\infty}((a,b))$ and $\vu\in\lW$ we have $\varphi \vu\in\lW$ and $(\mP_\lambda(\varphi\vu))(x)=\varphi(x)(\mP_\lambda\vu)(x)$.
%    \item[ii)] From the proof of the previous lemma we can deduce 
%    that $\lW$ is continuously embedded in $H^1_\mathrm{loc}(I_k)$,
%    which we write as $\lW\hookrightarrow H^1_\mathrm{loc}(I_k)$.
    %Of course the embedding is strict, which we illustrate on examples
    %in Section \ref{sec:examples}.
%    The embedding is strict, which can be argued 
 %   by noting that $H^1_\mathrm{loc}(I_k)$ is not a normed space
 %   (it is a Fr\'echet space), while $\lW$ is.
    
    %On the other hand, it is trivial to see that 
    %$H^1(a,b)\hookrightarrow \lW$ and $H^1(a,b)=\lW$ if and only if
    %$\alpha$ has no zeros on $[a,b]$.
\end{remark}

The boundary operator on $\lW$ and the minimal space $\lW_0$ can be defined more explicitly.
\begin{lemma}\label{lm:DW0}
 Let $\sigma(\mA(x))$ denotes the spectrum of matrix $\mA(x)$.
     \begin{enumerate}
         \item[\rm{(i)}] For any $\vu,\vv\in \lW$, the boundary operator can be characterised as
         \begin{align*}
             \iscp{\vu}{\vv}= \sum_{\lambda\in \sigma(\mA(b))\setminus \{0\}}\lambda(\mP_{\lambda}\vu)(b)\cdot (\mP_{\lambda}\vv)(b) \, -\!\!\sum_{\lambda\in \sigma(\mA(a))\setminus \{0\}}\lambda(\mP_{\lambda}\vu)(a)\cdot (\mP_{\lambda}\vv)(a) \;.
         \end{align*}
         \item[\rm{(ii)}] The domain of the closures $T_0$ and $\widetilde{T}_0$ is characterised as
         \begin{align*}
             \lW_0=\bigl\{\vu\in \lW:(\forall x\in \{a,b\})\; (\forall \lambda\in \sigma(\mA(x))\setminus \{0\})\; (\mP_{\lambda}\vu)(x)=0 \bigr\}\;.
         \end{align*}
     \end{enumerate}
\end{lemma}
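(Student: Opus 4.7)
The plan is to prove both parts by first verifying the claimed identities on the dense subspace $C_c^\infty(\R;\C^r)\subseteq\lW$, using the classical boundary formula \eqref{eq:D-scalar1d} together with the Hermitian spectral decomposition of $\mA(x)$ at the endpoints, and then extending to all of $\lW$ by density. The extension rests on the observation that Lemma \ref{lm:smth} makes every pointwise evaluation $\vu\mapsto(\mP_\lambda\vu)(x)$ used in the statement continuous on $\lW$.

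For (i), at each endpoint $x\in\{a,b\}$ the matrix $\mA(x)$ is Hermitian, so
\[
\mA(x)\;=\sum_{\lambda\in\sigma(\mA(x))\setminus\{0\}}\lambda\,\mP_\lambda(x)
\]
with pairwise orthogonal self-adjoint projections obeying $\mP_\lambda(x)\mP_\mu(x)=\delta_{\lambda,\mu}\mP_\lambda(x)$. This gives the pointwise identity $\mA(x)\mathsf{y}\cdot\mathsf{z}=\sum_{\lambda\neq 0}\lambda\,(\mP_\lambda(x)\mathsf{y})\cdot(\mP_\lambda(x)\mathsf{z})$ for any $\mathsf{y},\mathsf{z}\in\C^r$. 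Applied with $\mathsf{y}=\vu(x)$, $\mathsf{z}=\vv(x)$ for $\vu,\vv\in C_c^\infty(\R;\C^r)$ and combined with the identification $\mP_\lambda(x)\vu(x)=(\mP_\lambda\vu)(x)$ from Remark \ref{rem:h1loc}, substitution into \eqref{eq:D-scalar1d} yields the asserted formula on smooth functions. For (ii), the same decomposition gives $(\mA\vu)(x)=\sum_{\lambda\neq 0}\lambda\,(\mP_\lambda\vu)(x)$; since the summands lie in the pairwise orthogonal ranges $\ran\mP_\lambda(x)$, the sum vanishes iff each summand does, so (ii) follows from the explicit description \eqref{eq:W0} of $\lW_0$ once the identity is available for all $\vu\in\lW$.

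The main technical step, and the one I expect to be slightly delicate, is the passage from $C_c^\infty(\R;\C^r)$ to all of $\lW$. For this I exploit the fact that the proof of Lemma \ref{lm:smth} actually delivers the quantitative bound $\|(\mP_\lambda\vu)'\|_{L^2(I_{\lambda,x_0})}\leq C\|\vu\|_\lW$; together with the trivial $L^2$ estimate this makes $\vu\mapsto(\mP_\lambda\vu)|_{I_{\lambda,x_0}}$ a bounded linear map $\lW\to H^1(I_{\lambda,x_0};\C^r)$. Composing with the Sobolev embedding $H^1\hookrightarrow C$ and evaluation at the endpoint $x_0$ yields continuity of $\vu\mapsto(\mP_\lambda\vu)(x)$ as a map $\lW\to\C^r$ for every $x\in\{a,b\}$ and every $\lambda\in\sigma(\mA(x))\setminus\{0\}$. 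Together with the continuity of $\iscp{\cdot}{\cdot}$ on $\lW\times\lW$ (Theorem \ref{thm:abstractFO}(iii)) and of the trace $\vu\mapsto(\mA\vu)(x)$ (from $\mA\vu\in H^1((a,b);\C^r)$), both sides of each identity become jointly continuous, so density finishes (i). For (ii) one additionally notes that each $(\mP_\lambda\vu)(x)$ remains in the closed finite-dimensional subspace $\ran\mP_\lambda(x)$ under limits of smooth approximants, so the orthogonality argument reducing $(\mA\vu)(x)=0$ to componentwise vanishing remains valid.
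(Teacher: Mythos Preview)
Your proposal is correct and follows essentially the same route as the paper: verify the identities on $C_c^\infty(\R;\C^r)$ via the spectral decomposition of $\mA(x)$ at the endpoints, then pass to $\lW$ by density using the convergence $(\mP_\lambda\vu_n)(x)\to(\mP_\lambda\vu)(x)$ supplied by Lemma~\ref{lm:smth} (and the Sobolev embedding). The only cosmetic difference is that you make explicit the continuity of the evaluation $\vu\mapsto(\mP_\lambda\vu)(x)$ as a bounded linear map $\lW\to\C^r$, whereas the paper invokes the same fact more implicitly through approximating sequences; the content is identical.
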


\begin{proof}
    \begin{enumerate}
        \item[(i)] Let $\vu,\vv\in C_c^{\infty}(\R,\C^r)$, then (see \eqref{eq:D-scalar1d})
        \begin{align*}
            \iscp{\vu}{\vv}&=\mA(b)\vu(b)\cdot \vv(b)-\mA(a)\vu(a)\cdot \vv(a)\\
            &= \sum_{\lambda\in \sigma(\mA(b))\setminus \{0\}}\lambda \,\mP_{\lambda}(b)\vu(b)\cdot \vv(b) -\sum_{\lambda\in \sigma(\mA(a))\setminus \{0\}}\lambda \,\mP_{\lambda}(a)\vu(a)\cdot \vv(a)\;.
        \end{align*}
        Using $\mP_{\lambda}(x)=\mP_{\lambda}(x)^2$ and $\mP^*_{\lambda}(x)=\mP_{\lambda}(x)$ (see also Remark \ref{rem:h1loc}), we get
        \begin{align*}
            \iscp{\vu}{\vv}= \sum_{\lambda\in \sigma(\mA(b))\setminus \{0\}}\lambda(\mP_{\lambda}\vu)(b)\cdot (\mP_{\lambda}\vv)(b) -\sum_{\lambda\in \sigma(\mA(a))\setminus \{0\}}\lambda(\mP_{\lambda}\vu)(a)\cdot (\mP_{\lambda}\vv)(a) \;.
        \end{align*}
        Now, let $\vu,\vv\in \lW$. Then there exist sequences $\vu_n,\vv_n\in C_c^{\infty}(\R,\C^r)$ such that $\vu_n,\vv_n$ converge in $\lW$ to $\vu,\vv$, respectively. 
        By Lemma \ref{lm:smth} (see also Remark \ref{rem:h1loc}),
        $(\mP_{\lambda}\vu_n)(a)\to (\mP_{\lambda}\vu)(a)$ and $(\mP_{\lambda}\vu_n)(b)\to (\mP_{\lambda}\vu)(b)$. Hence,
        for any $\vu,\vv\in \lW$, 
        \begin{align*}
            \iscp{\vu}{\vv}= \sum_{\lambda\in \sigma(\mA(b))\setminus \{0\}}\lambda(\mP_{\lambda}\vu)(b)\cdot (\mP_{\lambda}\vv)(b) -\sum_{\lambda\in \sigma(\mA(a))\setminus \{0\}}\lambda(\mP_{\lambda}\vu)(a)\cdot (\mP_{\lambda}\vv)(a) \;.
        \end{align*}
        \item[(ii) ] The form of $\lW_0$ is described in \eqref{eq:W0}. Let $\vu \in  C_c^{\infty}((a,b);\C^r)$, then
        \begin{align*}
            \mA(a)\vu(a)=\sum_{\lambda\in \sigma(\mA(a))\setminus \{0\}}\lambda\mP_{\lambda}(a)\vu(a)=0\;.
        \end{align*}
Since $\mA(a)$ is Hermitian, the eigenprojections are orthogonal to each other. Which implies that $\{\mP_{\lambda}(a)\vu(a):\lambda \in \sigma(\mA(a))\setminus \{0\} \} $ is an orthogonal set. 
Thus, we have
        \begin{align*}
            \mA(a)\vu(a)=0\iff (\forall \lambda\in \sigma(\mA(a))\setminus \{0\})\quad (\mP_{\lambda}\vu)(a)= \mP_{\lambda}(a) \vu(a)=0\;.
        \end{align*}
        
Now, let $\vu \in \lW_0$ and take the sequence $\vu_n \in  C_c^{\infty}((a,b),\C^r)$ such that $\vu_n $ converges to $\vu $ in $\lW$. By Lemma \ref{lm:smth}, $(\mP_{\lambda}\vu_n)(a)\rightarrow (\mP_{\lambda}\vu)(a)$. Hence, we have
\begin{align*}
    (\forall \lambda\in \sigma(\mA(a))\setminus \{0\})\quad (\mP_{\lambda}\vu)(a)=0\;.
\end{align*}
Similarly, 
\begin{align*}
    (\forall \lambda\in \sigma(\mA(b))\setminus \{0\}) \quad (\mP_{\lambda}\vu)(b)=0\,,
\end{align*}
which completes the first inclusion.
For the converse, assume that for some $\vu\in \lW$, 
\begin{equation*}%\label{eq:converse}
    (\forall x\in \{a,b\}, \forall \lambda\in \sigma(\mA(x))\setminus \{0\})\quad (\mP_{\lambda}\vu)(x)=0\;.
\end{equation*}
Here, the evaluations are well defined by Remark \ref{rem:h1loc}. Due to the density of $C_c^{\infty}(\R,\C^r)$ in $\lW$, there exist $\vu_n \in C_c^{\infty}(\R,\C^r)$ such that $\vu_n\rightarrow \vu$ in $\lW$. For $x\in \{a,b\}$ we have
\begin{align*}
    (\mA\vu_n)(x)=\mA(x)\vu_n(x)=\!\!\sum_{\lambda\in\sigma(\mA(x))\setminus \{0\}}\lambda \mP_{\lambda}(x)\vu_n(x)=\!\! \sum_{\lambda\in\sigma(\mA(x))\setminus \{0\}}\lambda (\mP_{\lambda}\vu_n)(x)\,.
\end{align*}
By the Sobolev embedding theorem (see Lemma \ref{lm:smth} and Remark \ref{rem:h1loc}), by letting $n\to \infty$ we get
\begin{align*}
    (\mA\vu)(x)=\sum_{\lambda\in\sigma(\mA(x))\setminus \{0\}}\lambda (\mP_{\lambda}\vu)(x)=0\;.
\end{align*}
Hence,
\begin{align*}
    (\mA\vu)(a)=(\mA\vu)(b)=0\,,
\end{align*}
implying $\vu\in \lW_0$ by \eqref{eq:W0}.

    \end{enumerate}
\end{proof}

\section{One-dimensional vectorial (CFO)}\label{sec:1-d-vec}

In this section we shall see how to construct a pair of subspaces $(\lV,\widetilde{\lV})$ of $\lW$, i.e.~how to impose suitable boundary conditions, in order to get bijective realisations of \eqref{eq:TTtilda}. 
The strategy is to reduce the problem to the one-dimensional setting and use the approach of \cite{ES22}.
Of course, in the diagonal case it is easy to proceed with this plan (see Example 1 in Section \ref{sec:examples}), while in the general case we shall make use of the results on total projections just developed. 

More precisely, depending on the sign of eigenvalues of $\mA$ at the end-points, we can construct a
pair of subspaces $(\lV,\widetilde{\lV})$ of $\lW$, forming a pair of mutually adjoint bijective realisations of \eqref{eq:TTtilda}. We first define the subspaces 
$\{\lV_{\lambda,a},\widetilde\lV_{\lambda,a}\}_{\lambda\in \sigma(\mA(a))\setminus \{0\}}$ and $\{\lV_{\lambda,b},\widetilde\lV_{\lambda,b}\}_{\lambda\in \sigma(\mA(b))\setminus \{0\}}$ of $\lW$ as follows:

For $\lambda\in \sigma(\mA(a))\setminus \{0\}$
\begin{align*}
    \begin{tabular}{|c|c|c|c|}
  \hline
  Sign of $\lambda$ & $\lV_{\lambda,a}$ & $\widetilde{\lV}_{\lambda,a}$ \\
  \hline   
   $\lambda=0$ & $\lW $ & $\lW$ \\
   \hline
   $\lambda>0$ & $\{\vu\in \lW: (\mP_{\lambda}\vu)(a)=0\}$ & $\lW$\\
   \hline 
   $\lambda<0$ & $\lW$ & $\{\vu\in \lW: (\mP_{\lambda}\vu)(a)=0\}$ \\
  \hline 
\end{tabular}
\end{align*}
and for $\lambda\in \sigma(\mA(b))\setminus \{0\}$
\begin{align*}
    \begin{tabular}{|c|c|c|c|}
  \hline
  Sign of $\lambda$ & $\lV_{\lambda,b}$ & $\widetilde{\lV}_{\lambda,b}$ \\
  \hline   
   $\lambda=0$ & $\lW $ & $\lW$ \\
   \hline
   $\lambda>0$ & $\lW$ & $\{\vu\in \lW: (\mP_{\lambda}\vu)(b)=0\}$\\
   \hline 
   $\lambda<0$ & $\{\vu\in \lW: (\mP_{\lambda}\vu)(b)=0\}$ & $\lW$ \\
  \hline 
\end{tabular}
\end{align*}
Now we shall see that the subspaces
\begin{equation}\label{eq:construction}
    \lV = \lV_a\cap \lV_b  \quad \hbox{and}\quad 
        \widetilde\lV  = \widetilde{\lV}_a\cap \widetilde\lV_b\,,
\end{equation}
satisfy the condition (V), where
\begin{equation}\label{eq:cons-v}
\begin{aligned}
    &\lV_a:= \bigcap_{\lambda\in \sigma(\mA(a))\setminus \{0\}} \lV_{\lambda,a}\,,\quad\lV_b :=\bigcap_{\lambda\in \sigma(\mA(b))\setminus \{0\}}\lV_{\lambda,b}\,, \\
    & \widetilde\lV_a:= \bigcap_{\lambda\in \sigma(\mA(a))\setminus \{0\}} \widetilde\lV_{\lambda,a}\,,\quad \widetilde\lV_b:= \bigcap_{\lambda\in \sigma(\mA(b))\setminus \{0\}} \widetilde\lV_{\lambda,b}\,.
\end{aligned}
\end{equation}

\begin{lemma}\label{lm:cons}
The pair of subspaces $(\lV,\widetilde{\lV})$ of $\lW$, defined as above, satisfies the condition \rm{(V)}.
\end{lemma}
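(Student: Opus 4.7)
The plan is to verify the two defining properties of (V)-boundary conditions using the explicit formula for the boundary operator from Lemma \ref{lm:DW0}(i).

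First, I would verify (V1) by direct calculation. If $\vu\in\lV=\lV_a\cap\lV_b$, then by construction $(\mP_\lambda\vu)(a)=0$ whenever $\lambda>0$ at $a$, and $(\mP_\lambda\vu)(b)=0$ whenever $\lambda<0$ at $b$. Substituting into the formula in Lemma \ref{lm:DW0}(i) and using $\vw\cdot\vw=|\vw|^2$, only the terms with $\lambda<0$ at $a$ and $\lambda>0$ at $b$ survive, and each contributes a non-negative quantity; hence $\iscp{\vu}{\vu}\ge 0$. The computation for $\widetilde\lV$ is symmetric with opposite surviving signs, giving $\iscp{\vv}{\vv}\le 0$.

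Next, I would verify (V2). The inclusions $\lV\subseteq\widetilde\lV^{[\perp]}$ and $\widetilde\lV\subseteq\lV^{[\perp]}$ are immediate: for $\vu\in\lV$ and $\vv\in\widetilde\lV$, every term in the formula for $\iscp{\vu}{\vv}$ is killed, since at $b$ the $\lambda>0$ terms are killed by $\vv\in\widetilde\lV_b$ and the $\lambda<0$ terms by $\vu\in\lV_b$, and similarly at $a$. For the reverse inclusion, I would argue as follows. Let $\vu\in\widetilde\lV^{[\perp]}$. For a fixed $\lambda_0\in\sigma(\mA(a))$ with $\lambda_0>0$ and an arbitrary $\vnu\in\ran\mP_{\lambda_0}(a)$, construct a test function $\vv=\varphi\vnu$, where $\varphi\in C_c^\infty([a,b))$ is a cutoff with $\varphi(a)=1$ and $\supp\varphi\subseteq I_{\lambda_0,a}$. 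By Remark \ref{rem:h1loc}, $\vv\in\lW$, and $(\mP_\lambda\vv)(a)=\mP_\lambda(a)\vnu=\delta_{\lambda,\lambda_0}\vnu$ by orthogonality of the eigenprojections of the Hermitian matrix $\mA(a)$; moreover $\vv$ vanishes near $b$, so $(\mP_\lambda\vv)(b)=0$. These trace conditions place $\vv\in\widetilde\lV$. Substituting into the boundary-operator formula yields $\iscp{\vu}{\vv}=-\lambda_0(\mP_{\lambda_0}\vu)(a)\cdot\vnu$, which must vanish for all $\vnu\in\ran\mP_{\lambda_0}(a)$. Since $(\mP_{\lambda_0}\vu)(a)\in\ran\mP_{\lambda_0}(a)$ (because $\mP_{\lambda_0}$ is idempotent and continuous at $a$), this forces $(\mP_{\lambda_0}\vu)(a)=0$. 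The analogous construction with a cutoff supported near $b$ and $\lambda_0<0$ at $b$ gives $(\mP_{\lambda_0}\vu)(b)=0$, and together these imply $\vu\in\lV$. The inclusion $\lV^{[\perp]}\subseteq\widetilde\lV$ is proved by the symmetric argument, using cutoffs with $\lambda_0<0$ at $a$ and $\lambda_0>0$ at $b$.

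The main technical point is the test-function construction in the last step: I must be sure that (i) $\varphi\vnu$ actually lies in $\lW$, which follows from Remark \ref{rem:h1loc}, since constants are in $\lW$ on a bounded interval and multiplication by $\varphi\in W^{1,\infty}$ preserves $\lW$; (ii) the identity $(\mP_\lambda(\varphi\vnu))(a)=\mP_\lambda(a)\vnu$ holds, which is also exactly the commutation formula in Remark \ref{rem:h1loc}; and (iii) $(\mP_{\lambda_0}\vu)(a)$ indeed lies in $\ran\mP_{\lambda_0}(a)$, which follows by passing to the limit $x\to a^+$ in the identity $\mP_{\lambda_0}(x)(\mP_{\lambda_0}\vu)(x)=(\mP_{\lambda_0}\vu)(x)$ and using continuity of $\mP_{\lambda_0}$ near $a$ together with the $H^1$-continuity supplied by Lemma \ref{lm:smth}. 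With these three facts in place, the scalar-like argument from \cite{ES22} works eigenvalue-by-eigenvalue.
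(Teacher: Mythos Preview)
Your proposal is correct and follows essentially the same strategy as the paper: verify (V1) by direct substitution into the boundary-operator formula of Lemma~\ref{lm:DW0}(i), and establish (V2) by constructing test functions that isolate a single eigenprojection at a single endpoint. The only cosmetic difference is that the paper uses $H^1$ test functions of the form $\vu(x)=\frac{x-a}{b-a}\mP_\lambda(b)\mathsf{e}$ vanishing at the opposite endpoint, whereas you use cutoff-times-constant functions $\varphi\vnu$ supported near one endpoint; both serve the same purpose, and your version arguably makes the technical justifications (membership in $\lW$, the trace identities, and $(\mP_{\lambda_0}\vu)(a)\in\ran\mP_{\lambda_0}(a)$) more explicit.
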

\begin{proof}
By construction \eqref{eq:construction}, 
\begin{align*}
    (\forall \vu\in \lV)\quad \ \sum_{\lambda\in \sigma(\mA(b))\setminus \{0\}} \lambda|(\mP_{\lambda}\vu)(b)|^2-\sum_{\lambda\in \sigma(\mA(a))\setminus \{0\}}\lambda| (\mP_{\lambda}\vu)(a)|^2\geq 0\;.
\end{align*}
Similarly,
\begin{align*}
    (\forall \vu\in \widetilde\lV)\quad\ \sum_{\lambda\in \sigma(\mA(b))\setminus \{0\}} \lambda|(\mP_{\lambda}\vu)(b)|^2-\sum_{\lambda\in \sigma(\mA(a))\setminus \{0\}}\lambda| (\mP_{\lambda}\vu)(a)|^2\leq 0\;.
\end{align*}
Hence, by Lemma \ref{lm:DW0}(i), (V1) condition is satisfied.

Let $\vv\in \widetilde{\lV}$, then for any $\vu \in \lV$, we have
\begin{align*}
    \iscp{\vu}{\vv}= \sum_{\lambda\in \sigma(\mA(b))\setminus \{0\}}\lambda(\mP_{\lambda}\vu)(b)\cdot (\mP_{\lambda}\vv)(b) -\sum_{\lambda\in \sigma(\mA(a))\setminus \{0\}}\lambda(\mP_{\lambda}\vu)(a)\cdot (\mP_{\lambda}\vv)(a) \;.
\end{align*}
For $\lambda\in \sigma(\mA(a))\setminus \{0\} $, 
\begin{equation*}
    \left\{\begin{array}{lcl}
                       (\mP_{\lambda}\vu)(a)=0, & \mathrm{if}\  \lambda >0\\ 
                       (\mP_{\lambda}\vv)(a)=0, & \mathrm{if}\  \lambda <0 \;.
         \end{array}\right.
\end{equation*}
 Similarly, for $\lambda\in \sigma(\mA(b))\setminus \{0\} $, 
\begin{equation*}
    \left\{\begin{array}{lcl}
                       (\mP_{\lambda}\vv)(b)=0, & \mathrm{if}\  \lambda >0\\ 
                       (\mP_{\lambda}\vu)(b)=0, & \mathrm{if}\  \lambda <0 \;.
         \end{array}\right.
\end{equation*}
Which gives $\iscp{\vu}{\vv}=0$.
Thus, $\widetilde{\lV}\subseteq \lV^{[\perp]}$.

For the converse, let $\vv\in \lV^{[\perp]}$, then for any $\vu\in \lV$, 
\begin{align*}
    \iscp{\vu}{\vv}=0\;.
\end{align*}
If all the eigenvalues of $\mA(b)$ are nonpositive and all the eigenvalues of $\mA(a)$ are nonnegative i.e.~ $\lV=\lW_0$, then $\widetilde\lV = \lW$. Hence, the inclusion $\lV^{[\perp]}\subseteq \widetilde \lV$ is trivial. Let us assume that there exists a strictly positive eigenvalue of $\mA(b)$ (the left end-point is treated in an analogous manner).
Let us denote by $\lambda>0$, an arbitrary such eigenvalue. We choose $\vu \in H^1((a,b);\C^r)$ such that $\vu(a)=0, (\mP_{\lambda}\vu)(b)\neq 0$, and for any $\lambda'\in \sigma(\mA(b))\setminus \{\lambda\})$ we have $(\mP_{{\lambda}'} \vu)(b)= 0$ (e.g.~for $\mathsf{e}\in \C^r$ such that $\mP_{\lambda}(b)\mathsf{e}\neq 0$, we can take $\vu(x)=\bigl(\frac{x-a}{x-b}\bigr)\mP_{\lambda}(b)\mathsf{e}$). It is evident that $\vu\in \lV$. By inserting this $\vu$ in the identity above, we get
\begin{align*}
    \lambda(\mP_{\lambda}\vu)(b)(\mP_{\lambda}\vv)(b)=0\implies \vv \in \widetilde{\lV}_{\lambda,b}\;. 
\end{align*}
Since, $ \lambda\in \sigma(\mA(b))\setminus \{0\}$, $\lambda>0$, was arbitrary, we get $\vv \in \bigcap_{\lambda\in \sigma(\mA(b))\setminus \{0\}} \widetilde\lV_{\lambda,b}$.
Similarly, $\vv \in \bigcap_{\lambda\in \sigma(\mA(a))\setminus \{0\}} \widetilde\lV_{\lambda,a} $.
Hence, $\vv\in \widetilde\lV$, concluding the proof.
\end{proof}

An immediate consequence of the previous lemma (see Theorem \ref{thm:well-posedness}(i)) is that the pair $(T_1|_\lV, \widetilde T_1|_{\widetilde\lV})$
is a pair of mutually adjoint bijective realisations. 
Having this information available, we can by means of Theorem \ref{thm:well-posedness}(iii) get some information on the kernels, which is fundamental in describing all bijective realisations (cf.~\cite{AEM-2017, ES22, ES23}). 
This is precisely the content of the main result of the paper, which is given in the following theorem.

\begin{theorem}\label{thm:codim}
 For operators $T_1=\widetilde{T}^*$ and $\widetilde{T}_1=T^*$, where $T$ and $\widetilde T$ are given by \eqref{eq:TTtilda}, we have
\begin{equation}\label{eq:codim}
\dim \ker T_1= n_a^++n_b^- \quad \mathrm{and} \quad \dim \ker \widetilde{T}_1=n_a^-+n_b^+ \,,
\end{equation}
where $n_x^+, n_x^-$ are the number of positive and negative eigenvalues of the matrix $\mA(x)$, respectively.
\end{theorem}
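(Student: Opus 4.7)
The plan is to reduce the dimension computation to a codimension computation via Theorem \ref{thm:well-posedness}(iii), and then realise the quotient $\lW/\lV$ as an explicit finite-dimensional space of ``boundary data'' at $a$ and $b$.

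First, I would invoke the work already done: by Lemma \ref{lm:cons} the pair $(\lV,\widetilde\lV)$ from \eqref{eq:construction}--\eqref{eq:cons-v} satisfies (V), so Theorem \ref{thm:well-posedness}(i) gives that $T_1|_\lV$ is bijective. The subspace $\lV$ is closed in $\lW$ (each defining condition $\vu\mapsto(\mP_\lambda\vu)(x_0)$ is a continuous linear functional on $\lW$ by Lemma \ref{lm:smth} and Remark \ref{rem:h1loc}), and it contains $\lW_0$ by Lemma \ref{lm:DW0}(ii). Hence Theorem \ref{thm:well-posedness}(iii) applies and gives $\lW=\lV\dotplus\ker T_1$, so that
\[
\dim\ker T_1 \;=\; \operatorname{codim}_{\lW}\lV \;=\; \dim(\lW/\lV)\,.
\]

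Next I would compute this codimension by constructing an explicit isomorphism. Set
\[
E \;:=\; \bigoplus_{\substack{\lambda\in\sigma(\mA(a))\\ \lambda>0}}\!\!\ran\mP_\lambda(a)\;\oplus\;\bigoplus_{\substack{\lambda\in\sigma(\mA(b))\\ \lambda<0}}\!\!\ran\mP_\lambda(b)\,,
\]
and define $\Phi:\lW\to E$ by $\Phi(\vu):=\bigl(((\mP_\lambda\vu)(a))_{\lambda>0},\,((\mP_\lambda\vu)(b))_{\lambda<0}\bigr)$. By construction $\ker\Phi=\lV$. Because $\mA(a)$ and $\mA(b)$ are Hermitian, each positive eigenvalue contributes its (geometric$=$algebraic) multiplicity, so $\dim E = n_a^+ + n_b^-$.

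The step requiring real work is the surjectivity of $\Phi$; this is the main (though modest) obstacle since the eigenprojections need not be globally smooth on $[a,b]$. I would argue component by component: for a fixed positive $\lambda\in\sigma(\mA(a))$ and any $\ve\in\ran\mP_\lambda(a)$, pick a smooth cut-off $\chi\in C^\infty_c(\R)$ with $\chi\equiv 1$ near $a$ and $\chi\equiv 0$ near $b$, and set $\vu:=\chi\,\ve$. Then $\vu\in C^\infty_c(\R;\C^r)\subseteq\lW$, $\vu(b)=0$, and since eigenspaces of the Hermitian matrix $\mA(a)$ are mutually orthogonal, $\mP_{\lambda'}(a)\ve=\delta_{\lambda,\lambda'}\ve$ for every $\lambda'\in\sigma(\mA(a))$. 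Thus by Remark \ref{rem:h1loc}, $\Phi(\vu)$ has $\ve$ in the $(\lambda,a)$-slot and vanishes in all others. The analogous construction at $b$ handles the remaining components, and linearity gives surjectivity of $\Phi$.

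Combining the two steps, $\lW/\lV\cong E$ via $\Phi$, yielding $\dim\ker T_1 = n_a^+ + n_b^-$. The formula $\dim\ker\widetilde T_1 = n_a^- + n_b^+$ is obtained by applying exactly the same argument to the pair $(\widetilde T,T)$ and the subspace $\widetilde\lV$: the construction \eqref{eq:cons-v} swaps the roles of positive and negative eigenvalues at each endpoint, producing an analogous evaluation map with codomain of dimension $n_a^- + n_b^+$.
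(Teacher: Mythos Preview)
Your argument is correct. Both you and the paper start identically: Lemma \ref{lm:cons} gives (V), Theorem \ref{thm:well-posedness}(i) gives bijectivity of $T_1|_\lV$, and Theorem \ref{thm:well-posedness}(iii) converts the kernel dimension into the codimension $\dim(\lW/\lV)$. The difference is in how that codimension is computed. The paper proceeds via a separate Lemma (\ref{lm:quo-sum}), which splits $\dim(\lW/\lV)$ into a sum $\sum_{\lambda}\dim(\lW/\lV_{\lambda,a})+\sum_{\lambda}\dim(\lW/\lV_{\lambda,b})$ by repeatedly applying the algebraic codimension identities of Lemmas \ref{lm:codimension1}--\ref{lm:codimension2}; this in turn requires verifying that $\lV_a+\lV_b=\lW$ and that $\bigcap_{i<k}\lV_i+\lV_k=\lW$ for each $k$, using cut-offs and the local total projections. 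You bypass all of this by packaging the defining constraints of $\lV$ into a single linear map $\Phi:\lW\to E$ and showing surjectivity directly with the elementary test functions $\chi\,\ve$; the first isomorphism theorem then gives $\lW/\lV\cong E$ in one stroke. Your route is shorter and more self-contained, while the paper's route is more modular (the codimension lemmas and Lemma \ref{lm:quo-sum} are reusable pieces). One small point worth making explicit in your write-up is that $(\mP_\lambda\vu)(a)$ indeed lands in $\ran\mP_\lambda(a)$ for every $\vu\in\lW$ (so that $\Phi$ really takes values in $E$); this follows since $\mP_\lambda(x)$ is a projection for each $x$ and both $\mP_\lambda$ and $\mP_\lambda\vu$ are continuous near $a$.
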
 

\begin{proof}
For any closed subsapce $\lV\subseteq \lW$, $\lW_0\subseteq \lV$, such that $T_1|_{\lV}$ is bijective, we get by Theorem \ref{thm:well-posedness}(iii)
\begin{align*}
    \dim\ker T_1= \dim (\lW/ \lV)\;.
\end{align*}
By Lemma \ref{lm:cons} one such $\lV$ is given by \eqref{eq:construction}. Applying Lemma \ref{lm:quo-sum}, given below, we have
\begin{align*}
    \dim\ker T_1 \;=\; \sum_{\lambda\in \sigma(\mA(a))\setminus \{0\}}\dim\bigl(\lW/\lV_{\lambda,a}\bigr)+\sum_{\lambda\in \sigma(\mA(b))\setminus \{0\}}\dim\bigl(\lW/\lV_{\lambda,b}\bigr)\;.
\end{align*}
Hence, it is left to determine $\dim \bigl(\lW/\lV_{\lambda,a}\bigr)$ and $\dim \bigl(\lW/\lV_{\lambda,b}\bigr)$, for any $\lambda$.

From the construction, for any $\lambda \in \sigma (\mA(a))\setminus\{0\}$ we have 
\[\dim(\lW/\lV_{\lambda,a})\;=\;   \left\{
\begin{array}{ll}
      \operatorname{rank} \mP_{\lambda}(a), &  \lambda >0\,,\\
      0, & \lambda <0\;.\\
\end{array} 
\right. \]
Similarly, for any $\lambda \in \sigma (\mA(b))\setminus\{0\}$,
\[\dim(\lW/\lV_{\lambda,b})\;=\;   \left\{
\begin{array}{ll}
      \operatorname{rank} \mP_{\lambda}(b), & \lambda <0\,,\\
      0, & \lambda >0\;.\\
\end{array} 
\right. \]
This implies
\begin{align*}
    \sum_{\lambda \in \sigma (\mA(a))\setminus\{0\}}\dim\bigl(\lW/\lV_{\lambda,a}\bigr)=n_a^+ \quad \hbox{and} \     \sum_{\lambda \in \sigma (\mA(b))\setminus\{0\}}\dim\bigl(\lW/\lV_{\lambda,b}\bigr)=n_b^-\,, \ \
\end{align*}
Therefore,
\begin{align*}
       \dim\ker T_1 =n_a^++n_b^-\;.
\end{align*}

In a completely analogous way, one obtains 
\begin{align*}
    \dim \ker \widetilde{T}_1=n_a^-+n_b^+\;.
\end{align*}

%Fix some $k\in \{1,...,r\}$ and consider the pair $(\lV_{k,1}, %\lV_{k,2})$. In each cases of the signs of $\lambda_k$ at end-points, at %least one of these is $\lW$, except when $\lambda_k(a)>0$ and %$\lambda_k(b)<0$. In this case we have
%$\lV_{k,1}=\{\vu\in \lW:\hat \vu_k(a)=0\}$ and $\lV_{k,2}=\{\vu\in %\lW:\hat \vu_k(b)=0\}$. So,
%\begin{align*}
%    \lV_{k,1}\bigcap \lV_{k,2} =\{\vu\in \lW:\hat %\vu_k(a)=0=\hat\vu_k(b)\}\;.
%\end{align*}
%Which gives, 
%\begin{align*}
%&\mathrm{codim}\bigl(\lV_{k,1}\bigcap %\lV_{k,2}\bigr)=2=1+1=\mathrm{codim}\bigl(\lV_{k,1}\bigr)+\mathrm{codim}\%bigl(\lV_{k,2}\bigr)\\
%\implies & \mathrm{codim}\bigl(\lV_{k,1}+ %\lV_{k,2}\bigr)=\mathrm{codim}\bigl(\lV_{k,1}\bigr)+\mathrm{codim}\bigl(\%lV_{k,2}\bigr)- \mathrm{codim}\bigl(\lV_{k,1}\bigcap \lV_{k,2}\bigr)= 0
%\end{align*}
%So, in this case too, $\lV_{k,1}+\lV_{k,2}=\lW$.

%Now, for $j_1,j_2\in \{1,2\}$, $k_1,k_2\in \{1,...,r\}$ with $k_1\neq k_2 %$, for the pair $(\lV_{k_1,j_1}, \lV_{k_2,j_2})$, we have, for any %$\vu\in \lW$,
%\begin{align*}
%    \hat\vu=(\hat u_1,...,\hat u_r)^T=\frac{1}{2}(\hat u_1,..., (\hat u_{k_1}=0), ..., \hat u_{k_r})^T+\frac{1}{2}(\hat u_1,..., (\hat u_{k_2}=0), ..., \hat u_{k_r})^T \;.
%\end{align*}
%Which means, $u\in \lV_{k_1,j_1}+ \lV_{k_2,j_2}$.

\end{proof}

In the proof of Theorem \ref{thm:codim} we use the following lemma. 

\begin{lemma}\label{lm:quo-sum}
    Let $\lV$ and $\widetilde \lV$ be given by \eqref{eq:construction}. Then,
    \begin{equation*}%\label{eq:quo-sum}
    \begin{aligned}
    &\dim\bigl(\lW/\lV\bigr)=\sum_{\lambda\in \sigma(\mA(a))\setminus \{0\}}\dim\bigl(\lW/\lV_{\lambda,a}\bigr)+\sum_{\lambda\in \sigma(\mA(b))\setminus \{0\}}\dim\bigl(\lW/\lV_{\lambda,b}\bigr)\,,\\  \ \mathrm{and} \quad &\dim\bigl(\lW/\widetilde\lV\bigr)=\sum_{\lambda\in \sigma(\mA(a))\setminus \{0\}}\dim\bigl(\lW/\widetilde\lV_{\lambda,a}\bigr)+\sum_{\lambda\in \sigma(\mA(b))\setminus \{0\}}\dim\bigl(\lW/\widetilde\lV_{\lambda,b}\bigr)\;.
    \end{aligned}
    \end{equation*}
\end{lemma}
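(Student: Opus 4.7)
The plan is to realise $\lW/\lV$ as the image of a single joint evaluation map whose components are spectrally independent, and then invoke the first isomorphism theorem. Denote $\Sigma_a^+:=\{\lambda\in\sigma(\mA(a))\setminus\{0\}:\lambda>0\}$ and $\Sigma_b^-:=\{\lambda\in\sigma(\mA(b))\setminus\{0\}:\lambda<0\}$, and consider
\[
\Phi:\lW\longrightarrow \bigoplus_{\lambda\in\Sigma_a^+}\ran\mP_\lambda(a)\;\oplus\;\bigoplus_{\lambda\in\Sigma_b^-}\ran\mP_\lambda(b),\qquad
\Phi(\vu):=\bigl((\mP_\lambda\vu)(a)\bigr)_{\lambda\in\Sigma_a^+}\oplus\bigl((\mP_\lambda\vu)(b)\bigr)_{\lambda\in\Sigma_b^-}\,.
\]
By Lemma \ref{lm:smth} and Remark \ref{rem:h1loc} these spectral boundary evaluations are well defined on $\lW$, and \eqref{eq:cons-v} gives $\ker\Phi=\lV$ immediately (the positive eigenvalues at $a$ and the negative ones at $b$ are exactly the indices for which $\lV_{\lambda,\cdot}$ is a proper subspace cut out by a vanishing spectral trace, while the other indices contribute $\lV_{\lambda,\cdot}=\lW$).

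The crucial step is to show that $\Phi$ is surjective. Given a target tuple $\bigl((\mxi_\lambda^a)_{\lambda\in\Sigma_a^+},(\mxi_\lambda^b)_{\lambda\in\Sigma_b^-}\bigr)$, I would pick two cut-offs $\phi_a,\phi_b\in C^\infty_c(\R)$ satisfying $\phi_a(a)=\phi_b(b)=1$ and $\phi_a(b)=\phi_b(a)=0$, and set
\[
\vu(x):=\phi_a(x)\sum_{\lambda\in\Sigma_a^+}\mxi_\lambda^a+\phi_b(x)\sum_{\lambda\in\Sigma_b^-}\mxi_\lambda^b\,.
\]
Then $\vu\in H^1((a,b);\C^r)\subseteq\lW$, so its pointwise boundary values exist classically, and Remark \ref{rem:h1loc} gives $(\mP_\lambda\vu)(a)=\mP_\lambda(a)\vu(a)$. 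Since $\mA(a)$ is Hermitian its eigenprojections are mutually orthogonal, hence $\mP_\lambda(a)\mxi_{\lambda'}^a=\delta_{\lambda\lambda'}\mxi_{\lambda'}^a$; combined with $\phi_b(a)=0$ this forces $(\mP_\lambda\vu)(a)=\mxi_\lambda^a$, and an identical computation at $b$ completes surjectivity.

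With $\Phi$ surjective and $\ker\Phi=\lV$, the first isomorphism theorem yields
\[
\dim(\lW/\lV)=\sum_{\lambda\in\Sigma_a^+}\rank\mP_\lambda(a)+\sum_{\lambda\in\Sigma_b^-}\rank\mP_\lambda(b)\,.
\]
Applying the same cut-off construction to the single-component evaluation gives $\dim(\lW/\lV_{\lambda,a})=\rank\mP_\lambda(a)$ when $\lambda\in\Sigma_a^+$ (and $0$ for the remaining eigenvalues, since then $\lV_{\lambda,a}=\lW$), and analogously at $b$. This rewrites the right-hand side in the form stated in the lemma, and the formula for $\widetilde\lV$ follows from the symmetric argument in which the roles of positive and negative eigenvalues are interchanged.

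The only genuine obstacle I foresee is the surjectivity step: one has to confirm that the spectral boundary data prescribed independently at $a$ and at $b$, and across different eigenvalues, can be realised by a single element of $\lW$. Here the Hermiticity of $\mA(a)$ and $\mA(b)$ (so that the eigenprojections within each end-point are mutually orthogonal, preventing interference between different $\mxi_\lambda^a$'s) and the independence of the two end-points afforded by the cut-offs $\phi_a,\phi_b$ are precisely what makes the construction go through.
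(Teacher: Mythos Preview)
Your proof is correct and takes a genuinely different route from the paper's. The paper proceeds in two stages: first it shows $\lW=\lV_a+\lV_b$ via a single cut-off $\varphi$ and applies the codimension identity of Lemma~\ref{lm:codimension1} to separate the two end-points; then, within each end-point, it shows inductively that $\bigcap_{i=1}^{k-1}\lV_i+\lV_k=\lW$ for the finitely many relevant eigenvalues (using a second cut-off $\vartheta$ supported near the end-point together with the mutual orthogonality of the $\mP_k$'s), and invokes the iterated codimension identity of Lemma~\ref{lm:codimension2}. Your approach bypasses both codimension lemmas entirely by packaging all the spectral trace conditions into a single linear map $\Phi$ and applying the first isomorphism theorem; the cut-off construction then serves only to establish surjectivity in one shot. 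What the paper's stepwise decomposition buys is modularity---the independence of the two end-points and of the individual eigenvalues are recorded as separate facts---while your argument is shorter, more self-contained, and delivers the identification $\dim(\lW/\lV_{\lambda,a})=\rank\mP_\lambda(a)$ directly rather than having to read it off the construction afterwards.
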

\begin{proof}
We shall prove the first equality only, as the second one is completely analogous to it. Let us first show that $\lW = \lV_a+\lV_b$, where $\lV_a$ and $\lV_b$ are given by \eqref{eq:cons-v}.
The subspaces $\lV_a$ and $\lV_b$ can be rewritten as,
\begin{align*}
    \lV_a & = \{\vu \in \lW: (\forall  \lambda \in \sigma(\mA(a)), \lambda >0)\ (\mP_{\lambda}\vu)(a)=0 \} \\
   \mathrm{and}\quad \lV_b &= \{\vu \in \lW: (\forall  \lambda \in \sigma(\mA(b)), \lambda >0)\ (\mP_{\lambda}\vu)(b)=0\}\;.
\end{align*}
Let $\varphi$ be a smooth function on $[a,b]$ such that $\phi(a)=1$ and $\phi(b)=0$. For $\vu\in \lW$, we have $\vu=(1-\varphi)\vu+\varphi\vu$. It is clear that $(1-\varphi)\vu, \varphi\vu\in \lW$ and (see Remark \ref{rem:h1loc})
\begin{align*}
   &(\forall \lambda\in \sigma(\mA(b))\land \lambda<0)\quad  (\mP_{\lambda}(\varphi \vu))(b)=\varphi(b) (\mP_\lambda \vu)(b)=0\,, \\ 
   & (\forall \lambda\in \sigma(\mA(a))\land \lambda>0)\quad (\mP_{\lambda}(1-\varphi) \vu)(a)= (1-\varphi(a))(\mP_\lambda\vu)(a) = 0
\end{align*}
(in fact the above holds regardless of the sign of $\lambda$).
This implies that $\varphi\vu\in \lV_b$ and $(1-\varphi)\vu\in \lV_a$. Since $\vu\in \lW$ was arbitrary, $\lW\subseteq \lV_a+\lV_b$. From the construction, $\lV_a,\lV_b\subseteq \lW$, hence $\lW=\lV_a+\lV_b$. Thus, $\dim(\lW/(\lV_a+\lV_b))=0$. Using Lemma \ref{lm:codimension1}, we get \begin{align*}
    \dim(\lW/ \lV)\;=\;\dim(\lW/ \lV_a)\;+\;\dim(\lW/ \lV_b)\;.
\end{align*}
It is enough to prove that 
\begin{align*}
    \dim(\lW/\lV_a)\;=\;\sum_{\lambda\in \sigma(\mA(a))\setminus \{0\}}\dim\bigl(\lW/\lV_{\lambda,a}\bigr)\;.
\end{align*}

Let $\lambda_{i_1},\lambda_{i_2},\dots,\lambda_{i_n}$ $(n\leq r)$  be distinct positive eigenvalues of $\mA(a)$ and $\lV_k:=\lV_{\lambda_{i_k},a}$, $\mP_k:=\mP_{\lambda_{i_k}}$ for $k\in \{1,2,\dots,n\}$. We claim that for each $k\in\{2,3,\dots,n\}$,
\begin{equation}\label{eq:claim}
    \bigcap_{i=1}^{k-1}\lV_i+\lV_k=\lW\;.
\end{equation}

Let $\varepsilon>0$ be such that all $\mP_k$, $k=1,2,\dots,n$, are well-defined and Lipschitz continuous on $[a, a+2\varepsilon]$ (see Appendix). Let $\vartheta$ be a smooth function on $[a,b]$ such that 
\[\vartheta(x)\;=\;   \left\{
\begin{array}{ll}
      1 \,,& x\in [a, a+\varepsilon] \\
      0 \,,& x\in [a+2\varepsilon, b]
\end{array} \,.
\right. \]
For an arbitrary $\vv\in\lW$ we have $(1-\vartheta)\vv\in \lV_{k}$. Thus, it is sufficient to 
study $\vu:=\vartheta\vv$.
Since $\mP_k$ is well-defined on the support of $\vartheta$, we can write
\begin{align*}
    {\vu} = \mP_{k}\vu+(\mathbbm{1}-\mP_{k })\vu \;,
\end{align*}
where both functions on the right-hand side belongs to the graph space $\lW$.
From the following simple series of equalities
\begin{align*}
(\mP_{k}(\mathbbm{1}-\mP_{k })\vu)(a)=(\mP_{k}\vu)(a)-(\mP_{k}\mP_{k}\vu)(a)=(\mP_{k}\vu)(a)- (\mP_{k}\vu)(a)=0\,,
\end{align*}
we get $(\mathbbm{1}-\mP_k)\vu\in \lV_k$. Also, $(\mP_{k}\mP_{k}\vu)(a)=(\mP_{k}\vu)(a)$. If $(\mP_{k}\vu)(a)=0$, then $\vu\in \lV_k$. Let us assume that $(\mP_{k}\vu)(a)\neq 0$. Then, for any $1\leq i<k$, 
\begin{align*}
    (\mP_i\mP_k\vu)(a)=0\,,
\end{align*}
implying $\mP_k\vu\in \bigcap_{i=1}^{k-1}\lV_i$. Since, $\vv\in \lW$ was arbitrary, $\lW\subseteq \bigcap_{i=1}^{k-1}\lV_i+\lV_k$. Which proves the claim \eqref{eq:claim}. Hence,
\begin{align*}
    (\forall k\in \{2,3,\dots,n\})\quad \dim\Bigl(\lW\setminus \bigcap_{i=1}^{k-1}\lV_i+\lV_k\bigr)=0\;.
\end{align*}
Using Lemma \ref{lm:codimension2}, finally we obtain
\begin{align*}
    \dim(\lW/\lV_a)\;=\;\sum_{\lambda\in \sigma(\mA(a))\setminus \{0\}}\dim\bigl(\lW/\lV_{\lambda,a}\bigr)\;,
\end{align*}
which should have been shown.
\end{proof}

%From the decomposition \eqref{eq:decomposition}
%we see that $\ker T_1+\ker \widetilde{T}_1$, or equivalently
%$\lW/\lW_0$, plays an important role in studying boundary conditions
%associated to $T$ (or $\widetilde T$).
%Here we present a result on the codimension.

Having the result of Theorem \ref{thm:codim} at our hands, we can formulte the following corollary.

\begin{corollary} The codimension of the graph space $\lW$ over the minimal space $\lW_0$ is given by
\begin{equation*} 
\dim\bigl(\lW/{\lW_0}\bigr)= \operatorname{rank} \mA(a) +\operatorname{rank} \mA(b)\;.    
\end{equation*}
\end{corollary}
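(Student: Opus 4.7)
The plan is to combine Theorem \ref{thm:codim} with the quotient identity coming from the decomposition in Theorem \ref{thm:abstractFO}. The heavy lifting has already been done in establishing the dimensions of the kernels, so the corollary reduces to bookkeeping at the endpoints.

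First I would invoke Theorem \ref{thm:abstractFO}(viii), which tells us that the quotient space $\lW/\lW_0$ is isomorphic to $\ker T_1 \dotplus \ker \widetilde{T}_1$. Taking dimensions gives
\begin{equation*}
\dim\bigl(\lW/\lW_0\bigr) \;=\; \dim\ker T_1 \;+\; \dim\ker\widetilde{T}_1 \;.
\end{equation*}
Substituting the values from Theorem \ref{thm:codim}, the right-hand side becomes $(n_a^+ + n_b^-) + (n_a^- + n_b^+) = (n_a^+ + n_a^-) + (n_b^+ + n_b^-)$.

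Finally, since $\mA(x)$ is Hermitian for $x\in\{a,b\}$ (by condition (F1)), it is diagonalisable with real eigenvalues, so its rank equals the total count of nonzero eigenvalues (counted with multiplicity). Hence $\operatorname{rank}\mA(x) = n_x^+ + n_x^-$, and the identity
\begin{equation*}
\dim\bigl(\lW/\lW_0\bigr) \;=\; \operatorname{rank}\mA(a) + \operatorname{rank}\mA(b)
\end{equation*}
follows at once. There is no real obstacle here; the statement is essentially a corollary of Theorem \ref{thm:codim} together with the Hermitian character of the coefficient matrix at the endpoints, which ensures that the algebraic and geometric multiplicities at every eigenvalue agree.
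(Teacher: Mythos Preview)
Your proof is correct and follows essentially the same route as the paper: both derive $\dim(\lW/\lW_0)=\dim\ker T_1+\dim\ker\widetilde{T}_1$ from the decomposition in Theorem \ref{thm:abstractFO} (you cite part (viii), the paper cites the direct sum \eqref{eq:decomposition} from part (vi), which amounts to the same thing), then substitute the values from Theorem \ref{thm:codim} and regroup. Your extra remark that Hermiticity of $\mA(x)$ ensures $\operatorname{rank}\mA(x)=n_x^++n_x^-$ makes explicit what the paper leaves implicit.
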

\begin{proof}
By the decomposition \eqref{eq:decomposition}
    we have
    $$
    \dim(\ker T_1) + \dim(\ker\widetilde{T}_1) = 
       \dim \bigl(\lW/\lW_0\bigr) \,.
    $$
So, by the previous theorem, we have
\begin{align*}
           \dim \bigl(\lW/\lW_0\bigr)= n_a^++n_b^-+n_a^-+n_b^+= \operatorname{rank} \mA(a) +\operatorname{rank} \mA(b) \,.
\end{align*}
\end{proof}
\begin{remark}
    The triplet $(n_x^+,n_x^0,n_x^-)$ is called the inertia of the Hermitian matrix $\mA(x)$ which is relevant in Sylvester's law of inertia, where $n_x^0$ denotes the number of zero eigenvalues of $\mA(x)$.
\end{remark}

\begin{remark}
    When $n_a^++n_b^-=n_a^-+n_b^+$, then $\ker T_1\cong \ker\widetilde{T}_1$. Thus, by Theorem \ref{thm:well-posedness}(v) there exists a subspace $\lV$ of $\lW$ with $\lW_0\subseteq \lV$, such that $(T_1|_{\lV},\widetilde{T}_1|_{\lV})$ is a pair of mutually adjoint bijective realisations related to $(T_0,\widetilde{T}_0)$. 
    Another point of view is to say that in this case the skew-symmetric operator $T-\widetilde T$ (see \eqref{eq:TTtilda}) admits skew-selfadjoint extensions (cf.~\cite{ES23}). 
\end{remark}
\begin{remark}\label{rem:WisH1}
    If all the eigenvalues of $\mA(x)$ are strictly positive or strictly negative in $[a,b]$, then the graph space is $\lW=H^1((a,b);\C^r)$ and the minimal space is $\lW_0=H^1_0((a,b);\C^r)$. Using Theorem \ref{thm:codim}, $\dim\ker T_1+\dim \ker \widetilde{T}_1=r+r=2r$, which reveals a well-known fact that $H^1((a,b);\C^r)/H^1_0((a,b);\C^r)=2r$.
\end{remark}
\begin{remark}
    The result of Theorem \ref{thm:codim} is valid even if $\mA(x)$ is singular in some (or even all) points of the interval $[a,b]$. Thus we cover what might be called \emph{singular systems} of differential equations of the first order. These problems have a long history where often the problem of well-posedness was studied by obtaining the explicit formula in terms of (formal) series \cite[Chapter 4]{CO}
\end{remark}

\section{Examples}\label{sec:examples}
\subsection{Example 1}\label{ex:non-iso}
    Let us consider one-dimensional $(d=1)$ vectorial $(r=2)$ Friedrichs operator defined as follows
    \begin{align*}
        \mA(x)=\begin{bmatrix} 
	1 & 0  \\
	0 & 1-x \\
	\end{bmatrix},\  \mB(x)=\begin{bmatrix} 
	1 & 0  \\
	0 & 1 \\
	\end{bmatrix},\ x\in (0,1)\;.
    \end{align*}
    Here, $ \mB(x)+\mB^*(x)+\mA'(x)=\begin{bmatrix} 
	2 & 0  \\
	0 & 1 \\
	\end{bmatrix}$, hence conditions (F1) and (F2) are satisfied
 (see Example \ref{ex:cfo}).
 
 Due to the diagonal structure of this system, we can use the information obtained in $(r=1)$ case (see e.g.~\cite[Examples]{ES22}), to evaluate the dimensions of the kernels.
 Indeed, it is easy to see that $(\varphi_1,\varphi_2)^\top\in\ker T_1$
 if and only if for any $x\in (0,1)$ we have
 \begin{align*}
     \phi_1'(x)+\phi_1(x) =0 \quad \hbox{and} \quad
      ((1-x)\phi_2)'(x)+\phi_2(x) =0 \,.
 \end{align*}
 These scalar equations are already studied in this context in \cite{ES22} (see subsections 5.1 and 5.3 there) and the formal conclusion is that both equations contribute by 1 in the dimension of $\ker T_1$, i.e.
 $$
 \dim\ker T_1 = 2 \,.
 $$
 Analogously, for $\ker \widetilde T_1$ we need to study ($x\in (0,1)$)
  \begin{align*}
     -\phi_1'(x)+\phi_1(x) =0 \quad \hbox{and} \quad
      -((1-x)\phi_2)'(x)+\phi_2(x) =0 \,,
 \end{align*}
 which leads to 
  $$
 \dim\ker \widetilde T_1 = 1 \,.
 $$

Now we shall test the result of Theorem \ref{thm:codim} by investigating ranks of matrices
\begin{align*}
    \mA(0)=\begin{bmatrix}
            1&0\\0&1
        \end{bmatrix} \quad \hbox{and} \quad \mA(1)=\begin{bmatrix}
            1&0\\0&0
        \end{bmatrix}\;.
    \end{align*}
    Obviously, $n_0^+=2, n_0^-=0, n_1^+=1, n_1^-=0$. By Theorem \ref{thm:codim}, we have
    \begin{align*}
        \dim\ker T_1=2 \quad \hbox{and} \quad \dim\ker\widetilde{T}_1=1\;,
    \end{align*}
    which is in accordance with the information obtained as above.     

\begin{remark}
The choice of $\mB$ in this example is irrelevant.
More precisely, by Theorem \ref{thm:codim}, for any bounded $\mB$ such that the condition (F2) is satisfied (i.e.~the corresponding operators are Friedrichs operators) we have the same conclusion on the kernels.
Moreover, the same holds even for general operators (see \cite[Subsection 3.2]{ES23}).
\end{remark}
    
\subsection{Example 2: Second order linear ODE}
Let $I=(a,b)$ be an open interval. For $f\in L^2(I;\C)$, $p\in W^{1,\infty}(I; \R)$, $q\in L^{\infty}(I; \C)$, such that $p\geq \mu_0>0$, $\Re q\geq \mu_0>0$, consider the following ordinary differential equation on $I$,
\begin{equation}\label{eq:2ODE}
    -(p(x)u'(x))'+q(x)u(x)=f(x)\,.
\end{equation}

Let $\vu=(u_1,u_2)$ and consider the following system, for $\vf\in L^2(I;\C^2)$,
\begin{align*}
    T\vu:= (\mA\vu)'+\mB\vu = \vf\,,
\end{align*}
where 
\begin{align*}
    \mA= \begin{bmatrix}
        0 & -p \\ -p & 0
    \end{bmatrix} \quad \mathrm{and}\quad \mB = \begin{bmatrix}
        q & 0 \\ p'& p
    \end{bmatrix}.
\end{align*}
Here
\begin{align*}
    \mA'+\mB+\mB^*= \begin{bmatrix}
        2\Re q & 0 \\ 0 & 2p
    \end{bmatrix}\geq 2\mu_0\mI\;.
\end{align*}
For $\widetilde{T}\vu := -(\mA\vu)'+(\mA'+\mB^*)\vu$, the pair $(T, \widetilde T)$ forms a joint pair of abstract Friedrichs operators on $\lH= L^2(I; \C^2)$ with domain $\lD:= C_c^{\infty }(I; \C^2)$. For $ u_1 = u$, $u_2= u'$ and $\vf = (f,0)^\top$, the above system represents \eqref{eq:2ODE}. 
Therefore, one can easily transfer all well-posedness results for the system developed below to the original second-order equation \eqref{eq:2ODE}.

The graph space is $\lW=H^1(I;\C^2)$, which by the Sobolev embedding theorem means that for any $\vu\in \lW$ and $x\in [a,b]$, the evaluation $\vu(x)$ is well-defined. The boundary operator is given by
\begin{align*}
    (\forall \vu,\vv\in \lW)\quad \iscp{\vu}{\vv}=-p(b)u_2(b)\overline{v_1(b)}-p(b)u_1(b)\overline{v_2(b)}+p(a)u_2(a)\overline{v_1(a)}+p(a)u_1(a)\overline{v_2(a)}\,,
\end{align*}
and the minimal space $\lW_0$ is given by
\begin{align*}
    \lW_0= \{\vu\in \lW: u_1(a)=u_1(b)=u_2(a)=u_2(b)=0\}=H^1_0(I,\C^2)\;.
\end{align*}
Here the number of positive and negative eigenvalues are equal to 1 for both $\mA(b)$ and $\mA(a)$, i.e.~$n_a^+=n_a^-=n_b^+=n_b^-=1$ (hence we are in the regime discussed in Remark \ref{rem:WisH1}). Using Theorem \ref{thm:codim}, the dimensions of the kernels are equal to
\begin{align*}
    \dim\ker T_1=2 \quad \mathrm{and} \quad \dim\ker\widetilde{T}_1=2\;.
\end{align*}
%It is a well known fact that $\dim (H^1(I;\C^2)/H^1_0(I;\C^2))=4$, which is consistent with $\dim (\lW/\lW_0)=\dim\ker T_1+\dim\ker\widetilde{T}_1=4$. 
Since both kernels are non-trivial, Theorem \ref{thm:well-posedness}(ii) guarantees the existence of infinitely many bijective realisations related to $(T, \widetilde T)$. The kernels are also isomorphic, thus by Theorem \ref{thm:well-posedness}(v), there exists a subspace $\lV$ with $\lW_0\subseteq \lV\subseteq \lW$ such that $(T_1|_{\lV}, \widetilde{T}_1|_{\lV})$ is pair of bijective realisations related to $(T,\widetilde T)$. In fact one such subspace is given by
\begin{align*}
    \lV_1= \{\vu \in \lW: u_1(a)=u_1(b)=0\}= H^1_0(I;\C)\times H^1(I;\C)\;.
\end{align*}
Due to Theorem \ref{thm:well-posedness}(i) it is enough to check that $(\lV_1,\lV_1)$ satisfies (V)-boundary conditions. For any $\vu, \vv\in \lV_1$ we clearly have $\iscp{\vu}{\vv}=0$, which in particular gives (V1)-boundary condition and $\lV_1\subseteq \lV_1^{[\perp]}$. Now for $\vv\in \lV_1^{[\perp]}$ and for any $\vu \in \lV_1 $ the expression $\iscp{\vu}{\vv}=0$ reads
\begin{align*}
    -p(b)u_2(b)\overline{v_1(b)}+p(a)u_2(a)\overline{v_1(a)}=0\;.
\end{align*}
Choice of $\vu\in \lV_1$, such that $u_2(a)\neq 0$, $u_2(b)= 0$, gives $v_1(a)=0$, and analogously for $v_1(b)$.  Thus, $\vv\in\lV_1$, implying $\lV_1^{[\perp]}\subseteq \lV_1$ and hence (V2)-boundary condition also holds. 

Note that the previous choice gives the homogeneous Dirichlet boundary condition for \eqref{eq:2ODE}. 
On the other hand the choice
\begin{align*}
    \lV_2=\widetilde \lV_2=\{\vu\in \lW: u_2(a)=u_2(b)=0\}= H^1(I;\C)\times H^1_0(I;\C)\,,
\end{align*}
gives the homogeneous Neumann boundary condition.

The restriction on the coefficient $q$ seems to be very strict, but thanks to the non-uniqueness of the representation of the equations as Friedrichs systems, we can ease this condition. For the choice $\vu=(u_1,u_2)^\top=(e^{-\beta x}u',e^{-\beta x}u)^\top$, for $\beta\in\R$, the equation \eqref{eq:2ODE} can be rewritten as
\begin{align*}
    (\mA\vu)'+\mB\vu = \vf\,,
\end{align*}
with
\begin{align*}
    \mA= \begin{bmatrix}
        p & 0 \\ 0 & p
    \end{bmatrix} \;, \quad 
    \mB = \begin{bmatrix}
        \beta p & -q \\ -p& \beta p-p'
    \end{bmatrix} \;, \quad
    \vf = \begin{bmatrix}
     e^{-\beta x}f\\ 0   
    \end{bmatrix} \;.
\end{align*}
Here
\begin{align*}
    \mA'+\mB+\mB^*= \begin{bmatrix}
        2\beta  p+ p' & -(p+q) \\ -(p+\overline{q}) & 2\beta  p-p'
    \end{bmatrix}\;,
\end{align*}
is positive for $p\geq \mu_0>0$, for any bounded $q$ and sufficiently large $\beta\in \R$. 

The graph space and the minimal space of the corresponding Friedrichs operator are again given by
$\lW=H^1(I;\C^2)$ and $\lW_0=H^1_0(I;\C^2)$,
but the boundary operator differs: $\vu,\vv\in \lW$,
\begin{align*}
    \iscp{\vu}{\vv}= p(b)u_1(b)\overline{v_1(b)}+p(b)u_2(b)\overline{v_2(b)}-p(a)u_1(a)\overline{v_1(a)}-p(b)u_2(a)\overline{v_2(a)}\,.
\end{align*}
$\mA(a)$ and $\mA(b)$ both have two positive eigenvalues and no negative eigenvalues. Hence, by Theorem \ref{thm:codim}, $\dim\ker T_1=2$ and $\dim\ker\widetilde{T}_1=2$. 

In this representation both the homogeneous Dirichlet and the homogeneous Neumann boundary conditions are still admissible (the realisations are bijective). 
However, here we cannot take the same boundary conditions for both operators $T$ and $\widetilde T$, as it was the case in the previous representation. Namely, both pairs $(\lV_1,\lV_2)$ and $(\lV_2,\lV_1)$ define bijective realisations, i.e.~if we take the homogeneous Dirichelt boundary condition for $T$, then the homogeneous Neumann boundary condition should be imposed for $\widetilde T$.

\section{Appendix}\label{sec:kato}
\subsection{Eigenvalues and eigenprojections}
In our setting, the eigenvalues and eigenprojections of a Lipschitz continuous matrix function $\mA\in W^{1,\infty}((a,b);\Mat{r})$ in the definition of Friedrichs operators (e.g.~ see Example \ref{ex:cfo}) are required to be Lipschitz continuous functions in some neighborhood of the end-points $a$ and $b$ of the interval $[a,b]$. The following theorem gives us the smoothness of the eigenvalues.

\begin{theorem}[Hoffman-Wielandt Inequality \cite{Bhatia}]\label{tm:hoff}
    Let $\mA$ and $\mB$ be Herimitian matrices of order $n$ and $\lambda_1(\mA)\geq ...\geq \lambda_n(\mA)$ and $\lambda_1(\mB)\geq ...\geq \lambda_n(\mB)$ be their eigenvalues, respectively. Then we have
    \begin{align*}
        \sum_{i=1}^n |\lambda_i(\mA)-\lambda_i(\mB)|^p\leq |\mA-\mB|_p^p\;.
    \end{align*}
    Where, $p\geq 1$ and $|\cdot |_p$ is the $p-$norm.
\end{theorem}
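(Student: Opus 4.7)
The plan is to reduce the target inequality to a convex-majorization statement and then invoke the Lidskii--Wielandt theorem. Setting $\mC := \mA-\mB$, since $\mC$ is Hermitian its singular values coincide with $\{|\lambda_i(\mC)|\}_i$, so the right-hand side rewrites as $\sum_{i=1}^n |\lambda_i(\mC)|^p$. The inequality then takes the form
$$\sum_{i=1}^n |\lambda_i(\mA)-\lambda_i(\mB)|^p \;\leq\; \sum_{i=1}^n |\lambda_i(\mC)|^p,$$
a weak-majorization statement between rearranged absolute-value vectors, which is well-suited to the Hardy--Littlewood--P\'olya formalism.

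The core technical step is the Lidskii--Wielandt majorization: sorting eigenvalues in decreasing order,
$$\bigl(\lambda_i(\mA)-\lambda_i(\mB)\bigr)_{i=1}^n \;\prec\; \vec{\lambda}(\mC).$$
I would deduce this from Ky Fan's trace maximum principle, which identifies $\sum_{i=1}^k \lambda_i(\mA) = \max_{\mP} \operatorname{tr}(\mA\mP)$ over rank-$k$ orthogonal projections $\mP$. Taking $\mQ$ to be the spectral projector onto the top $k$ eigenvectors of $\mA$ and combining with the dual inequality $\sum_{i=1}^k \lambda_i(\mB) \geq \operatorname{tr}(\mB\mQ)$ gives the initial-segment bound
$$\sum_{i=1}^k \bigl(\lambda_i(\mA)-\lambda_i(\mB)\bigr) \;\leq\; \operatorname{tr}(\mC\mQ) \;\leq\; \sum_{i=1}^k \lambda_i(\mC),$$
while the trace identity $\operatorname{tr}(\mA)-\operatorname{tr}(\mB)=\operatorname{tr}(\mC)$ handles the case $k=n$. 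The genuine obstacle is promoting this bound from initial segments $\{1,\dots,k\}$ to the full majorization of the \emph{sorted} rearrangement of the signed difference vector; that upgrade is the content of Lidskii's theorem and is the technical heart of the argument, typically carried out via a refined variational / matching argument (e.g.~exploiting the extreme-point structure of doubly stochastic matrices through Birkhoff's theorem).

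With Lidskii--Wielandt in hand, the conclusion follows from standard majorization theory. A signed majorization $\vec{d}\prec\vec{\mu}$ between real vectors implies weak majorization of the rearranged absolute values, $|\vec{d}|^{\downarrow} \prec_w |\vec{\mu}|^{\downarrow}$. Since $t\mapsto t^p$ is convex and nondecreasing on $[0,\infty)$ for $p\geq 1$, applying it componentwise and summing (Hardy--Littlewood--P\'olya) yields the displayed inequality and hence the theorem. Thus the only genuinely hard step is the Lidskii--Wielandt majorization itself; everything else is routine convexity.
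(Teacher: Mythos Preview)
The paper does not prove this theorem at all: it is quoted from the reference \cite{Bhatia} and used as a black box in the Appendix to conclude that the eigenvalues of $\mA(x)$ are Lipschitz continuous on $[a,b]$. There is therefore nothing to compare your argument against.

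On its own merits, your outline is the standard route and is sound. One small simplification: once you have the Lidskii--Wielandt majorization $\bigl(\lambda_i(\mA)-\lambda_i(\mB)\bigr)_i \prec \vec{\lambda}(\mC)$, you can skip the intermediate step through weak majorization of absolute values. Majorization $\vec d\prec\vec\mu$ is equivalent to $\vec d$ lying in the convex hull of the permutations of $\vec\mu$, so for every convex $\phi:\R\to\R$ one has $\sum_i\phi(d_i)\le\sum_i\phi(\mu_i)$; taking $\phi(t)=|t|^p$ (convex for $p\ge 1$) gives the inequality directly. You are also right that the Ky Fan argument only bounds $\sum_{i=1}^k(\lambda_i(\mA)-\lambda_i(\mB))$, not the sum over the $k$ largest entries of the (generally unsorted) difference vector, so the genuine work does sit in Lidskii's theorem; your assessment of where the difficulty lies is accurate.
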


Using Theorem \ref{tm:hoff} we get that all eigenvalues of $\mA(x), x\in [a,b]$, are Lipschitz continuous functions of $\mA(x)$ and $\mA(x)$ is Lipschitz continuous in the interval $[a,b]$. Thus, all the eigenvalues are Lipschitz continuous on $[a,b]$. A similar result on eigenvectors is not expected. Here we present an argument on eigenprojections which resolves our specific situation.

We adapt our definitions and notations from \cite{Kato}.
\begin{itemize}
    \item[(i)] For the definition of \emph{$\lambda-$group}, we refer to Chapter II, Section 1.2 of \cite{Kato}. Let $\lambda_1(x)\geq \lambda_2(x)\geq,\dots, \lambda_r(x)$ be the eigenvalues of $\mA(x)$ (which are Lipschitz continuous). Note that we repeat eigenvalues with higher multiplicity (so-called repeated eigenvalues). Also, let $I(x)$ denotes the set of indices such that for any $x$ (close to $x_0$) $\{\lambda_i(x):i\in I(x)\}$ is the set of all eigenvalues of $\mA(x)$ forming the $\lambda-$group, where we are not repeating eigenvalues of higher multiplicity. Thus, $I(x)$ really depends on $x$.
    
    The \emph{total projection} corresponding to the $\lambda-$group is given by
    \begin{equation}\label{eq:totalproj_sum}
        \mP_{\lambda}(x)\;=\;\sum_{i\in I(x)} \mP_i(x)\,,
    \end{equation}
    where $\mP_i(x)$ denotes the eigenprojection corresponding to $\lambda_i(x)$. For any $x\in [a,b]$ it holds $\mP_i^2(x)=\mP_i(x)=\mP^*_i(x)$, while since $\mA(x)$ is Hermitian we have in addition
    \begin{equation*}
        \mP_i(x)\mP_j(x)=\delta_{i,j}\mP_j(x), \ i,j\in \{1,2,\dots,r\}
    \end{equation*}
    and
    \begin{equation*}
        \sum_{\lambda\in\sigma(\mA(x))} \mP_{\lambda}(x) = \mathbbm{1} \,.
    \end{equation*}
    
        Let us consider the following example to illustrate the definitions just introduced. Let $\mA(x)$ be a $2\times 2$ continuous function on $[0,1]$ such that it has eigenvalues $x$, $x\sin(1/x)$. Then we have
    $\lambda_1(x)=x$, $\lambda_2(x)=x\sin(1/x)$. At $x=0$, we have $\lambda_1(0)=\lambda_2(0)=0$ (the multiplicity is $2$), while for $\varepsilon>0$ (sufficiently small) and any $x\in (0,\eps)$ we have
    \begin{align*}
        I(x)\;=\;   \left\{
\begin{array}{ll}
      \{1\}, & (\exists k\in \mathbb{N})\ x=\frac{2}{k\pi} \,,\\
      \{1,2\}, & \mathrm{otherwise}\;.\\
\end{array} 
\right. 
    \end{align*}
    
    \item[(ii)] The resolvent definition of the \emph{total projection} can be found in \cite[Chapter II, Section 1.4]{Kato}. Let $x_0\in [a,b]$ and $\lambda $ be an eigenvalue of $\mA = \mA(x_0)$ with multiplicity $m$. Let $\Gamma$ be a positively oriented curve, say a circle, in the resolvent set $\rho(\mA)$ of $\mA$ enclosing $\lambda$ but no other eigenvalues of $\mA$. The second Neumann series is then convergent for $x$ sufficiently close to $x_0$, uniformly for $\xi\in \Gamma$. The existence of the resolvent $\mR(\xi,x)$ of $\mA(x)$ for $\xi\in\Gamma$ implies that there are no eigenvalues of $\mA(x)$ on $\Gamma$. The operator
    \begin{equation}\label{eq:totalproj}
        \mP_{\lambda}(x)\;=\;-\frac{1}{2\pi i}\int_{\Gamma} \mR(\xi,x)d\xi\,,
    \end{equation}
    is a projection and is equal to the sum of the eigenprojections of all the eigenvalues of $\mA(x)$ lying inside $\Gamma$. The eigenvalues of $\mA(x)$ lying inside $\Gamma$ form exactly the $\lambda-$group and thus $\mP_\lambda(x)$ defined in \eqref{eq:totalproj} corresponds to the total projection associated to the $\lambda-$group (see \eqref{eq:totalproj_sum}). Here $\mP(x_0)$ is precisely the eigenprojection corresponding to the eigenvalue $\lambda$ of $\mA$ and the following holds
    \begin{align*}
        \rank \mP_{\lambda}(x)=\rank \mP_{\lambda}(x_0)=m\,,
    \end{align*}
    for $x$ sufficiently close to $x_0$.

\end{itemize}

Let $x,y$ be sufficiently close to $x_0$ as previously. More precisely, let $\varepsilon>0$ be small and $x,y \in (x_0-\varepsilon, x_0+\varepsilon)$. For a fixed $\xi\in \Gamma$, using the resolvent identity
\begin{equation*}
    \mR(\xi,y)-\mR(\xi,x) = \mR(\xi,y)(\mA(x)-\mA(y))\mR(\xi,x)
\end{equation*}
we get
\begin{equation}\label{eq:resolvent}
\begin{aligned}
    |\mR(\xi,y)-\mR(\xi,x)|&\leq |\mR(\xi,y)||\mR(\xi,x)||\mA(y)-\mA(x)|\\
    &\leq\Bigl( \max_{\xi\in \Gamma,\, z\in (x_0-\varepsilon, x_0+\varepsilon)} |\mR(\xi,z)|\Bigr)^2|\mA(x)-\mA(y)|\,, 
\end{aligned}
\end{equation}
where $(\xi, x) \mapsto \mR(\xi, x)$ is continuous \cite[Section II.5.1]{Kato}. Hence, from \eqref{eq:resolvent}, we conclude that $\mR(\xi,x)$ is Lipschitz continuous in $x$ variable on $(x_0-\varepsilon,x_0+\varepsilon)$, uniformly in $\xi\in\Gamma$.
Therefore, from the resolvent definition of the total projection \eqref{eq:totalproj}, we get that $\mP_{\lambda}(x)$ is Lipschitz continuous on $(x_0-\varepsilon,x_0+\varepsilon)$.

\begin{remark}
    As mentioned in \cite[Chapter II, Section 5.3]{Kato}, in general, the eigenprojections do not have these continuity results, thus total projections can not be replaced by eigenprojections.
\end{remark}   

\subsection{Codimension}
In order to obtain the result of Theorem \ref{thm:codim}, we make use of the following elementary identities.
\begin{lemma}\label{lm:codimension1}\cite[Sec.~II.7, Proposition 6]{B74}
    {If $M$ and $N$ are two subspaces of a vector space $V$ of finite codimensions, then both $M+N$ and $M\cap N$ have finite codimensions and }
\begin{equation*}%\label{int-codim}
    \mathrm{codim}(M+N)+\mathrm{codim}(M\cap N)=\mathrm{codim}(M)+\mathrm{codim}(N)\;.
\end{equation*}
\end{lemma}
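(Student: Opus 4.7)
The plan is to construct a short exact sequence of quotient vector spaces relating the four relevant quotients, and then deduce the identity by the standard dimension formula for exact sequences. Specifically, I would establish exactness of
\[
0 \;\longrightarrow\; V/(M\cap N) \;\xrightarrow{\;\phi\;}\; V/M \oplus V/N \;\xrightarrow{\;\psi\;}\; V/(M+N) \;\longrightarrow\; 0 \,,
\]
where $\phi(v+M\cap N) := (v+M,\, v+N)$ and $\psi(u+M,\, v+N) := (u-v)+(M+N)$.

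First I would verify that both maps are well defined and linear: for $\phi$, if $v-v'\in M\cap N$ then $v-v'\in M$ and $v-v'\in N$; for $\psi$, adding an element of $M$ to the first coordinate or an element of $N$ to the second only alters $u-v$ by an element of $M+N$. Injectivity of $\phi$ is immediate (if $v\in M$ and $v\in N$, then $v\in M\cap N$). Surjectivity of $\psi$ follows by choosing the representative $(w+M,\, 0+N)$ for any target class $w+(M+N)$.

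The content-bearing step is exactness at the middle term $V/M\oplus V/N$. The inclusion $\operatorname{image}(\phi)\subseteq\ker\psi$ is trivial since $\psi\circ\phi$ sends $v+M\cap N$ to $0+(M+N)$. For the converse, given $(u+M,\, v+N)\in\ker\psi$, I can write $u-v = m+n$ with $m\in M$, $n\in N$. Setting $w := u-m = v+n$, I would check that $w+M = u+M$ and $w+N = v+N$, whence $\phi(w+M\cap N) = (u+M,\, v+N)$.

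Once exactness is established, the conclusion is routine: the middle term has dimension $\operatorname{codim}(M)+\operatorname{codim}(N)$, which is finite by hypothesis, so its submodule $\operatorname{image}(\phi)\cong V/(M\cap N)$ and its quotient $V/(M+N)$ are both finite-dimensional, and the rank-nullity identity applied to $\psi$ yields
\[
\dim V/(M\cap N) + \dim V/(M+N) \;=\; \dim V/M + \dim V/N \,,
\]
which is the stated equality. There is no genuine obstacle here since this is a standard linear-algebra fact; the only point requiring slight care is the choice of signs in $\psi$ to ensure that $\ker\psi$ is precisely the image of $\phi$ rather than a strictly larger subspace.
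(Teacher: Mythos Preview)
Your argument is correct. The exact sequence
\[
0 \;\longrightarrow\; V/(M\cap N) \;\xrightarrow{\;\phi\;}\; V/M \oplus V/N \;\xrightarrow{\;\psi\;}\; V/(M+N) \;\longrightarrow\; 0
\]
is indeed exact for the maps you define, and from finite-dimensionality of the middle term the additivity of dimension along a short exact sequence yields the identity.

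There is nothing to compare against in the paper: the lemma is stated with a citation to Bourbaki and no proof is reproduced. Your exact-sequence argument is in fact essentially Bourbaki's own approach (the proposition in question is placed right after the general result that in a short exact sequence of vector spaces the dimension of the middle term is the sum of the dimensions of the outer terms), so you have recovered the intended proof.
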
 
Previous theorem can be generalised as follows:
\begin{lemma}\label{lm:codimension2}
    If $\{M_k\}_{k=1,2,...,n}$ are subspaces of a vector space $V$ of finite codimensions, then
    \begin{align*}
        \mathrm{codim}{\bigcap_{k=1}^n}M_k= \sum_{k=1}^n\mathrm{codim}M_k+\sum_{j=2}^{n}(-1)^{n+j-1}\mathrm{codim} \bigl(\bigcap_{k=1}^{j-1}M_k+M_j\bigr)\;.
    \end{align*}
\end{lemma}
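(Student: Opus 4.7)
The plan is to proceed by induction on $n$, with Lemma \ref{lm:codimension1} serving both as the base case and as the recursive engine. For $n=2$, the outer sum contains a single term with sign $(-1)^{2+2-1}=-1$, so the stated identity collapses to
\[
\mathrm{codim}(M_1\cap M_2)\;=\;\mathrm{codim}(M_1)+\mathrm{codim}(M_2)-\mathrm{codim}(M_1+M_2),
\]
which is exactly Lemma \ref{lm:codimension1}. In particular this also guarantees that $M_1\cap M_2$ has finite codimension, a fact I will reuse at every step.

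For the inductive step from $n-1$ to $n$, I would set $N:=\bigcap_{k=1}^{n-1}M_k$, use the inductive hypothesis to conclude that $N$ has finite codimension, and apply Lemma \ref{lm:codimension1} to the pair $(N,M_n)$ to obtain
\[
\mathrm{codim}\bigl(\textstyle\bigcap_{k=1}^{n}M_k\bigr)\;=\;\mathrm{codim}(N)+\mathrm{codim}(M_n)-\mathrm{codim}\bigl(\textstyle\bigcap_{k=1}^{n-1}M_k+M_n\bigr).
\]
The last summand is exactly the $j=n$ contribution $(-1)^{n+n-1}\,\mathrm{codim}(\bigcap_{k<n}M_k+M_n)=-\mathrm{codim}(\bigcap_{k<n}M_k+M_n)$ required by the right-hand side. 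Substituting the inductive hypothesis for $\mathrm{codim}(N)$ and adding the $\mathrm{codim}(M_n)$ term would, in an ideal world, recover the stated formula with sign exponents $(-1)^{n+j-1}$ for $j=2,\dots,n$.

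The main obstacle, and in fact the real content of the lemma, is the sign bookkeeping in the induction. The inductive hypothesis at level $n-1$ produces contributions with signs $(-1)^{(n-1)+j-1}=-(-1)^{n+j-1}$, which are the negatives of the targets on the right-hand side of the stated identity for $n$. To reconcile this, I would couple the induction by proving simultaneously the identity for $n$ \emph{and} the complementary (sign-flipped) identity for $n-1$, so that the extra flip is absorbed at each stage; a cleaner route is to expand $\mathrm{codim}(N)$ by iterating Lemma \ref{lm:codimension1} directly along the filtration $\bigcap_{k=1}^{j}M_k=\bigcap_{k=1}^{j-1}M_k\cap M_j$, which exposes the telescoping sums $\bigcap_{k<j}M_k+M_j$ with explicit alternation. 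In the specific application in Lemma \ref{lm:quo-sum}, each of these intermediate codimensions vanishes, so the outer sum contributes nothing and the identity reduces to $\mathrm{codim}(\bigcap_{k=1}^{n}M_k)=\sum_{k=1}^{n}\mathrm{codim}(M_k)$; in that regime the sign convention is immaterial and the conclusion follows at once from the recursion in the previous paragraph.
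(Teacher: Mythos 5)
The paper states Lemma~\ref{lm:codimension2} without proof, so there is no argument to compare against; what can be assessed is whether your approach is sound and whether the statement itself holds.

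Your instinct to iterate Lemma~\ref{lm:codimension1} along the filtration $\bigcap_{k\le j}M_k = \bigl(\bigcap_{k<j}M_k\bigr)\cap M_j$ is exactly right, and the sign tension you noticed is not a defect of your proof but a genuine error in the stated formula. Writing $N_j := \bigcap_{k=1}^{j}M_k$ and applying Lemma~\ref{lm:codimension1} to the pair $(N_{j-1},M_j)$ for $j=2,\dots,n$ and summing telescopically gives
\begin{equation*}
\mathrm{codim}\,\bigcap_{k=1}^{n}M_k \;=\; \sum_{k=1}^{n}\mathrm{codim}\,M_k \;-\; \sum_{j=2}^{n}\mathrm{codim}\Bigl(\bigcap_{k=1}^{j-1}M_k+M_j\Bigr)\;,
\end{equation*}
i.e.\ every correction term carries the constant sign $-1$, with no alternation. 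The exponent $(-1)^{n+j-1}$ in the lemma agrees with this only when $j$ and $n$ have the same parity; for $n=3$, $j=2$ it gives $+1$, which is wrong (take $M_1=M_2=\{x=0\}$, $M_3=\{y=0\}$ in $\mathbb{R}^3$: the left side is $2$, the stated right side is $4$). So your attempts to ``couple the induction'' or to find ``explicit alternation'' were doomed from the start: there is nothing to reconcile, and the correct move was to conclude decisively that the sign should be a plain minus. Your final remark is the one that matters for the paper: in Lemma~\ref{lm:quo-sum} each quantity $\mathrm{codim}\bigl(\bigcap_{k<j}\lV_{i}+\lV_{j}\bigr)$ vanishes, so the entire correction sum is zero regardless of sign and the downstream argument is unaffected by the typo. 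Had you combined your telescoping computation with the confidence to flag the sign error, the proposal would have been complete.
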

\section{Acknowledgements}%\label{sec:thanks}
% =========================================================================
This work is supported by the Croatian Science Foundation under project 
IP-2022-10-7261 ADESO.

\end{document}